\documentclass[oneside,english]{amsart}
\usepackage[T1]{fontenc}
\usepackage[latin9]{inputenc}
\usepackage{prettyref}
\usepackage{bm}
\usepackage{amstext}
\usepackage{amsthm}
\usepackage{amssymb}
\usepackage[all]{xy}

\makeatletter
\numberwithin{equation}{section}
\numberwithin{figure}{section}
\theoremstyle{plain}
\newtheorem{thm}{\protect\theoremname}[section]
\theoremstyle{definition}
\newtheorem{example}[thm]{\protect\examplename}
\theoremstyle{remark}
\newtheorem{rem}[thm]{\protect\remarkname}
\theoremstyle{definition}
\newtheorem{defn}[thm]{\protect\definitionname}
\theoremstyle{plain}
\newtheorem{lem}[thm]{\protect\lemmaname}
\theoremstyle{plain}
\newtheorem{prop}[thm]{\protect\propositionname}
\theoremstyle{plain}
\newtheorem{cor}[thm]{\protect\corollaryname}

\usepackage{mathtools}
\usepackage{braket}
\usepackage{prettyref}
\usepackage{tikz}
\usetikzlibrary{calc}

\newrefformat{prop}{Proposition \ref{#1}}
\newrefformat{cor}{Corollary \ref{#1}}
\newrefformat{rem}{Remark \ref{#1}}
\newrefformat{subsec}{Subsection \ref{#1}}

\newcommand{\ns}[1]{\prescript{\ast}{}{#1}}
\newcommand{\Top}{\mathbf{Top}}
\newcommand{\CR}{\mathbf{CR}}
\newcommand{\Unif}{\mathbf{Unif}}
\newcommand{\uE}{\mathbf{uE}}

\@ifundefined{showcaptionsetup}{}{%
 \PassOptionsToPackage{caption=false}{subfig}}
\usepackage{subfig}
\makeatother

\usepackage{babel}
\providecommand{\corollaryname}{Corollary}
\providecommand{\definitionname}{Definition}
\providecommand{\examplename}{Example}
\providecommand{\lemmaname}{Lemma}
\providecommand{\propositionname}{Proposition}
\providecommand{\remarkname}{Remark}
\providecommand{\theoremname}{Theorem}

\begin{document}
\title[Vietoris-type and microsimplicial homology theories]{Relationship among various Vietoris-type and microsimplicial homology
theories}
\author{Takuma Imamura}
\address{Research Institute for Mathematical Sciences\\
Kyoto University\\
Kitashirakawa Oiwake-cho, Sakyo-ku, Kyoto 606-8502, Japan}
\email{timamura@kurims.kyoto-u.ac.jp}
\begin{abstract}
In this paper, we clarify the relationship among the Vietoris-type
homology theories and the microsimplicial homology theories, where
the latter are nonstandard homology theories defined by M. C. McCord
(for topological spaces), T. Korppi (for completely regular topological
spaces) and the author (for uniform spaces). We show that McCord's
and our homology are isomorphic for all compact uniform spaces and
that Korppi's and our homology are isomorphic for all fine uniform
spaces. Our homology shares many good properties with Korppi's homology.
As an example, we outline a proof of the continuity of our homology
with respect to uniform resolutions. S. Garavaglia proved that McCord's
homology is isomorphic to Vietoris homology for all compact topological
spaces. Inspired by this result, we prove that our homology is isomorphic
to uniform Vietoris homology for all precompact uniform spaces and
that Korppi's homology is isomorphic to normal Vietoris homology for
all pseudocompact completely regular topological spaces.
\end{abstract}

\keywords{McCord homology; Korppi homology; $\mu$-homology; Vietoris homology;
nonstandard analysis.}
\subjclass[2000]{55N05; 55N35; 54J05.}
\maketitle

\section{Introduction}

Nonstandard homology theories have been developed for various spaces
in the existing literature. M. C. McCord \cite{McC72} introduced
a nonstandard homology of topological spaces, the first investigation
of nonstandard homology. By imitating McCord's definition, T. Korppi
\cite{Kor07,Kor12} constructed another nonstandard homology for completely
regular spaces, and the author \cite{Ima16} defined a similar one
for uniform spaces, called $\mu$-homology. These homology theories
are all \emph{microsimplicial}, i.e., based on hyperfinite chains
of infinitesimally small (micro) simplices. Other types of nonstandard
homology can be found in \cite[Section 7]{Ima16}.

McCord mentioned a similarity between microsimplicial and Vietoris-type
homology theories. Typical examples of Vietoris-type are as follows:
Vietoris homology of topological spaces, normal Vietoris homology
of completely regular spaces, and uniform Vietoris homology of uniform
spaces. \emph{Informally}, Vietoris-type homology is the homology
of the Vietoris complex of some ``infinitely fine'' cover. Formally,
Vietoris-type homology is defined as the inverse limit of the homologies
of Vietoris complexes, where the limit runs over some directed set
of covers with respect to refinement. Note that Vietoris' construction
indirectly deals with infinitely fine covers, just like epsilon\textendash delta
analysis does with infinitesimals. There is another way to formulate
the informal concept ``infinitely fine'', namely, the use of nonstandard
analysis. Nonstandard analysis enables us directly to deal with infinitely
fine covers. Microsimplicial homology is \emph{formally} defined as
the homology of the Vietoris complex of some infinitely fine cover
in the nonstandard sense. Under these circumstances, it is natural
to expect that each microsimplicial homology is isomorphic to the
corresponding Vietoris-type homology. Indeed, Garavaglia \cite{Gar78}
proved that McCord homology is isomorphic to Vietoris homology for
all compact spaces. The equivalence, however, does not hold for some
non-compact spaces. This means that McCord homology is \emph{not}
just a paraphrase of Vietoris homology. Vietoris-type homology theories
are inexact even for compact metrisable pairs (see \cite[Example 2, p.126]{MS82}),
because they are defined by inverse limits which do not preserve exact
sequences. By contrast, microsimplicial homology theories are exact
for all pairs of spaces. This is an advantage of microsimplicial-type
compared with Vietoris-type.

This paper aims to clarify the relationship among Vietoris-type and
microsimplicial homology theories. \prettyref{fig:Rel-between-V/M-homs}
illustrates the relationship among the homology theories mentioned
so far.
\begin{figure}
\[
\xymatrix{ & \text{Vietoris-type} &  & \text{microsimplicial}\\
\text{topological} & \text{Vietoris}\ar@{=}[d]_{\text{paracompact}}\ar@{=}[rr]^{\text{compact}} &  & \text{McCord}\ar@{=}[d]^{\text{compact}}\\
\text{completely regular} & \text{normal Vietoris}\ar@{=}[d]_{\text{fine}}\ar@{=}[rr]^{\text{pseudocompact}} &  & \text{Korppi}\ar@{=}[d]^{\text{fine}}\\
\text{uniform} & \text{uniform Vietoris}\ar@{=}[rr]^{\text{precompact}} &  & \mu
}
\]
\caption{\label{fig:Rel-between-V/M-homs}Double lines indicate isomorphisms.
The label of each line indicates the condition for that isomorphism.}
\end{figure}

In \prettyref{sec:Rel-to-Korppi-homology}, we deal with the microsimplicial
homology theories. We show that
\begin{itemize}
\item McCord homology and $\mu$-homology are exactly the same for all compact
uniform spaces;
\item Korppi homology and $\mu$-homology are exactly the same for all fine
uniform spaces.
\end{itemize}
We remark that these equalities do not hold in general. Because of
the second equality, we can regard $\mu$-homology as a generalisation
of Korppi homology from fine uniform spaces to general uniform spaces.
$\mu$-homology inherits many properties from Korppi homology. For
example, uniform Vietoris homology with standard coefficients can
be embedded into $\mu$-homology with nonstandard coefficients. This
is a generalisation of the fact that normal Vietoris homology can
be embedded into Korppi homology. As another example, we outline a
proof of the continuity of $\mu$-homology with respect to uniform
resolutions.

In \prettyref{sec:Rel-between-uniform-Vietoris-and-mu-homology},
we prove the following two isomorphisms (inspired by Garavaglia's
result):
\begin{itemize}
\item $\mu$-homology is isomorphic to uniform Vietoris homology for all
precompact uniform spaces;
\item Korppi homology is isomorphic to normal Vietoris homology for all
pseudocompact completely regular spaces.
\end{itemize}
To prove the former isomorphism, we introduce the notion of S-homotopy,
the singular homotopy on the category of uniform spaces with S-continuous
maps (which admit infinitesimal discontinuity), and show that uniform
Vietoris homology satisfies the S-homotopy axiom for all precompact
uniform spaces. The latter isomorphism immediately follows from the
former one. We also prove that these isomorphisms do not hold in general.

$\mu$-homology can be extended to the category of nonstandard subsets
of uniform spaces with nonstandardly continuous maps. We briefly discuss
this extension in \prettyref{sec:Homology-of-nonstandard-subsets}.

Finally, in \prettyref{sec:Use-of-the-saturation-principle}, we remark
the necessity and unnecessity of the saturation principle. This remark
will be important for considering the microsimplicial homology theories
in general nonstandard models.

\section{\label{sec:Preliminaries}Preliminaries}

We use the basic terminology in uniform topology (see the monograph
\cite{Isb64} by Isbell). We assume the reader to be familiar with
nonstandard mathematics, in particular, nonstandard topology. For
basic notions and results in nonstandard topology, we refer to Robinson
\cite{Rob66} and Stroyan and Luxemburg \cite{SL76}.

\subsection{Basic settings of nonstandard analysis}

As in \cite{Ima16}, we use Robinson-style nonstandard analysis. We
fix a transitive universe $\mathbb{U}$, called the standard universe,
that satisfies sufficiently many (but only finitely many) axioms of
ZFC. All standard objects we consider belong to $\mathbb{U}$. More
specifically, using the reflection principle, we pick a large enough
ordinal $\lambda$ and let $\mathbb{U}:=V_{\lambda}$, where $V_{\lambda}$
is the cumulative hierarchy of rank $\lambda$. We also fix an elementary
extension $\ns{\mathbb{U}}$ of $\mathbb{U}$, called the internal
universe, that is $\left|\mathbb{U}\right|^{+}$-saturated. In particular,
$\ns{\mathbb{U}}$ is an enlargement of $\mathbb{U}$. By saying the
words `transfer', `saturation' and `weak saturation', we indicate
the elementary equivalence between $\mathbb{U}$ and $\ns{\mathbb{U}}$,
the saturation property of $\ns{\mathbb{U}}$ and the enlargement
property of $\ns{\mathbb{U}}$ relative to $\mathbb{U}$, respectively.
The map $x\mapsto\ns{x}$ denotes the elementary embedding $\mathbb{U}\hookrightarrow\ns{\mathbb{U}}$.
We omit the star of $\ns{x}$ when $x$ is considered to be an atomic
object (such as a number and a point). Given a concept $X$ on $\mathbb{U}$
definable by a first-order formula with parameters in $\mathbb{U}$,
the associated concept on $\ns{\mathbb{U}}$ definable by the same
formula is called\emph{ internal $X$}, \emph{hyper $X$}, $\ns{X}$,
etc.

\subsection{Vietoris-type homology theories}

We here recall the definition schema of Vietoris-type homology theories
\cite{Dow52}.

Let $X$ and $Y$ be sets and let $R$ be a subset of $X\times Y$.
The \emph{Vietoris complex} of $\left(X,Y,R\right)$ is the simplicial
set $\mathcal{V}\left(X,Y,R\right)$ whose vertices are the points
of $X$ and whose vertices $a_{0},\ldots,a_{p}$ span a simplex if
and only if there exists a $b\in Y$ such that $a_{i}\mathrel{R}b\ \left(0\leq i\leq p\right)$.
\prettyref{fig:ex-of-V-complex} gives an example of a Vietoris complex.
\begin{figure}
\subfloat[]{\begin{tikzpicture}
\node (a) at (1, {sqrt(3)}) {$a$};
\node (b) at (0, 0) {$b$};
\node (c) at (2, 0) {$c$};
\draw (0.5, {sqrt(3) / 2}) circle [x radius=1.5, y radius=0.5, rotate=60];
\draw (1, 0) circle [x radius=1.5, y radius=0.5];
\draw (1.5, {sqrt(3) / 2}) circle [x radius=1.5, y radius=0.5, rotate=120];
\end{tikzpicture}}\subfloat[]{\begin{tikzpicture}
\fill (1, {sqrt(3)}) circle[radius=1pt] node [above=2pt] {$a$};
\fill (0, 0) circle[radius=1pt] node [left=2pt] {$b$};
\fill (2, 0) circle[radius=1pt] node [right=2pt] {$c$};
\draw (1, {sqrt(3)})--(0, 0)--(2, 0)--cycle;
\end{tikzpicture}}\caption{\label{fig:ex-of-V-complex}The left depicts the three point space
$X:=\set{a,b,c}$ equipped with the cover $\lambda:=\set{\set{a,b},\set{a,c},\set{b,c}}$.
The right depicts a geometric realisation of $\mathcal{V}\left(X,\lambda,\in\right)$
within the plane.}
\end{figure}

Let $X$ be a set and let $\lambda$ and $\mu$ be covers of $X$.
$\lambda$ is called a \emph{refinement} of $\mu$ if there is a map
$\varphi\colon\lambda\to\mu$ such that $U\subseteq\varphi\left(U\right)$
for all $U\in\lambda$. The cover $\lambda\wedge\mu:=\set{U\cap V|U\in\lambda,V\in\mu}$
is a common refinement of $\lambda$ and $\mu$. Hence, the set of
all covers forms a (downward) directed set under refinement.

Let $G$ be an abelian group. Let $\left(X,A\right)$ be a pair of
sets (such that $A\subseteq X$) and $\mathcal{D}_{X}$ a directed
set of covers of $X$. Given $\lambda\in\mathcal{D}_{X}$, we denote
by $\left(X_{\lambda},A_{\lambda}\right)$ the simplicial pair $\left(\mathcal{V}\left(X,\lambda,\in\right),\mathcal{V}\left(A,\lambda,\in\right)\right)$.
If $\lambda$ is a refinement of $\mu$, $X_{\lambda}$ and $A_{\lambda}$
are simplicial subsets of $X_{\mu}$ and $A_{\mu}$, respectively.
Let $i_{\mu\lambda}\colon\left(X_{\lambda},A_{\lambda}\right)\hookrightarrow\left(X_{\mu},A_{\mu}\right)$
be the inclusion and let $p_{\mu\lambda}:=H_{\bullet}\left(i_{\mu\lambda};G\right)$.
Thus we have an inverse system $\left(\mathcal{D}_{X},H_{\bullet}\left(X_{\lambda},A_{\lambda};G\right),p_{\mu\lambda}\right)$,
called the \emph{Vietoris system} for $\left(X,A\right)$. Here $H_{\bullet}\left(\cdot;G\right)$
denotes the ordinary homology functor of simplicial pairs with coefficients
in $G$. The \emph{Vietoris homology} of $\left(X,A\right)$ with
coefficients in $G$ with respect to $\mathcal{D}_{X}$ is the inverse
limit
\[
\check{H}_{\bullet}^{\mathcal{D}_{X}}\left(X,A;G\right):=\lim_{\lambda\in\mathcal{D}_{X}}H_{\bullet}\left(X_{\lambda},A_{\lambda};G\right).
\]

\begin{example}
Let $\Top_{2}$ be the category of topological pairs with continuous
maps. We denote by $\mathcal{O}_{X}$ the directed set of all open
covers of a topological space $X$. The \emph{Vietoris homology} of
a topological pair $\left(X,A\right)$ is $\check{H}_{\bullet}\left(X,A;G\right):=\check{H}_{\bullet}^{\mathcal{O}_{X}}\left(X,A;G\right)$.
One can extend $\check{H}_{\bullet}\left(\cdot;G\right)$ to a functor
from $\Top_{2}$ to the category of graded abelian groups as follows:
let $f\colon\left(X,A\right)\to\left(Y,B\right)$ be a continuous
map. Given $\lambda\in\mathcal{O}_{Y}$, its pullback $f^{-1}\lambda:=\set{f^{-1}\left(U\right)|U\in\lambda}$
is in $\mathcal{O}_{X}$. Hence, $f$ canonically induces a homomorphism
$f_{\lambda}\colon\check{H}_{\bullet}\left(X,A;G\right)\to H_{\bullet}\left(Y_{\lambda},B_{\lambda};G\right)$.
If $\lambda$ is a refinement of $\mu$, then the diagram
\[
\xymatrix{ & \check{H}_{\bullet}\left(X,A;G\right)\ar[dl]_{f_{\lambda}}\ar[dr]^{f_{\mu}}\\
H_{\bullet}\left(Y_{\lambda},B_{\lambda};G\right)\ar[rr]^{p_{\mu\lambda}} &  & H_{\bullet}\left(Y_{\mu},B_{\mu};G\right)
}
\]
commutes. By the universal property of $\check{H}_{\bullet}\left(Y,B;G\right)$,
there exists a unique homomorphism $\check{H}_{\bullet}\left(f;G\right)\colon\check{H}_{\bullet}\left(X,A;G\right)\to\check{H}_{\bullet}\left(Y,B;G\right)$
that makes the diagram
\[
\xymatrix{\check{H}_{\bullet}\left(X,A;G\right)\ar[dr]_{f_{\lambda}}\ar[rr]^{\check{H}_{\bullet}\left(f;G\right)} &  & \check{H}_{\bullet}\left(Y,B;G\right)\ar[dl]^{p_{\lambda}}\\
 & H_{\bullet}\left(Y_{\lambda},B_{\lambda};G\right)
}
\]
commutative for every $\lambda\in\mathcal{O}_{Y}$, where $p_{\lambda}$
is the canonical projection. It is easy to verify that $\check{H}_{\bullet}\left(\cdot;G\right)$
is functorial.
\end{example}

\begin{example}
Let $\CR_{2}$ be the category of completely regular pairs with continuous
maps. We denote by $\mathcal{N}_{X}$ the directed set of all normal
covers of a completely regular space $X$. The \emph{normal Vietoris
homology} of a completely regular pair $\left(X,A\right)$ is $\check{H}_{\bullet}^{n}\left(X,A;G\right):=\check{H}_{\bullet}^{\mathcal{N}_{X}}\left(X,A;G\right)$.
Like above, one can extend $\check{H}_{\bullet}\left(\cdot;G\right)$
to a functor on $\CR_{2}$.
\end{example}

\begin{example}
Let $\Unif_{2}$ be the category of uniform pairs with uniformly continuous
maps. We denote by $\mathcal{U}_{X}$ the directed set of all uniform
covers of a uniform space $X$. The \emph{uniform Vietoris homology}
of a uniform pair $\left(X,A\right)$ is $\check{H}_{\bullet}^{u}\left(X,A;G\right):=\check{H}_{\bullet}^{\mathcal{U}_{X}}\left(X,A;G\right)$.
One can extend $\check{H}_{\bullet}\left(\cdot;G\right)$ to a functor
on $\Unif_{2}$ as before.
\end{example}

\begin{rem}
Uniform Vietoris homology has another definition in terms of entourages
instead of uniform covers. Let $\left(X,A\right)$ be a uniform pair.
Let $\mathcal{E}_{X}$ be the set of all entourages of $X$. $\mathcal{E}_{X}$
forms a (downward) directed set under inclusion. Given $U\in\mathcal{E}_{X}$,
we denote by $\left(X_{U},A_{U}\right)$ the simplicial pair $\left(\mathcal{V}\left(X,X,U\right),\mathcal{V}\left(A,X,U\cap\left(A\times X\right)\right)\right)$.
If $U\subseteq V$, $X_{U}$ and $A_{U}$ are simplicial subsets of
$X_{V}$ and $A_{V}$, respectively. Let $i_{VU}\colon\left(X_{U},A_{U}\right)\hookrightarrow\left(X_{V},A_{V}\right)$
be the inclusion and let $p_{VU}:=H_{\bullet}\left(i_{VU};G\right)$.
Thus we have an inverse system $\left(\mathcal{E}_{X},H_{\bullet}\left(X_{U},A_{U};G\right),p_{VU}\right)$.
The uniform Vietoris homology of $\left(X,Y\right)$ can be redefined
as the inverse limit $\check{H}_{\bullet}^{u}\left(X,A;G\right):=\lim_{U\in\mathcal{E}_{X}}H_{\bullet}\left(X_{U},A_{U};G\right)$.
\end{rem}

\subsection{Microsimplicial homology theories}

We first recall the definition of McCord homology \cite{McC72}. McCord
homology is a homology theory on $\Top_{2}$ (or, more rigorously,
on its small full subcategory $\Top_{2}\cap\mathbb{U}$). Let $X$
be a standard topological space and $G$ an internal abelian group.
The \emph{monad} of $x\in X$ is the set
\[
\mu_{X}\left(x\right):=\bigcap\set{\ns{U}|x\in U\colon\text{open}}.
\]
We say that a member $\left(a_{0},\ldots,a_{p}\right)$ of $\ns{X^{p+1}}$
is a \emph{$p$\textendash microsimplex} if $\set{a_{0},\ldots,a_{p}}\subseteq\mu_{X}\left(x\right)$
holds for some $x\in X$. We denote by $C_{p}^{M}\left(X;G\right)$
the $G$-module that consists of all internal hyperfinite formal sums
of $p$\textendash microsimplices with coefficients in $G$. The boundary
map $\partial_{p}\colon C_{p}^{M}\left(X;G\right)\to C_{p-1}^{M}\left(X;G\right)$
is defined by
\[
\partial_{p}\left(a_{0},\ldots,a_{p}\right):=\sum_{i=0}^{p}\left(-1\right)^{i}\left(a_{0},\ldots,a_{i-1},a_{i+1},\ldots,a_{p}\right).
\]
It is obvious that $C_{\bullet}^{M}\left(X;G\right)$ is a chain complex,
called the \emph{McCord microchain complex}. Every standard continuous
map $f\colon X\to Y$ functorially induces a homomorphism $C_{\bullet}^{M}\left(f;G\right)\colon C_{\bullet}^{M}\left(X;G\right)\to C_{\bullet}^{M}\left(Y;G\right)$
defined by letting
\[
C_{p}^{M}\left(f;G\right)\left(a_{0},\ldots,a_{p}\right):=\left(\ns{f}\left(a_{0}\right),\ldots,\ns{f}\left(a_{p}\right)\right).
\]
Let $\left(X,A\right)$ be a standard topological pair. $C_{\bullet}^{M}\left(A;G\right)$
is a subchain complex of $C_{\bullet}^{M}\left(X;G\right)$. We define
\[
C_{\bullet}^{M}\left(X,A;G\right):=\frac{C_{\bullet}^{M}\left(X;G\right)}{C_{\bullet}^{M}\left(A;G\right)}.
\]
Let $f\colon\left(X,A\right)\to\left(Y,B\right)$ be a standard continuous
map. Since $C_{\bullet}^{M}\left(f;G\right)\colon C_{\bullet}^{M}\left(X;G\right)\to C_{\bullet}^{M}\left(Y;G\right)$
maps $C_{\bullet}^{M}\left(A;G\right)$ to $C_{\bullet}^{M}\left(B;G\right)$,
$f$ functorially induces a homomorphism $C_{\bullet}^{M}\left(f;G\right)\colon C_{\bullet}^{M}\left(X,A;G\right)\to C_{\bullet}^{M}\left(Y,B;G\right)$.
We define $H_{\bullet}^{M}\left(\cdot,\cdot;G\right):=H_{\bullet}C_{\bullet}^{M}\left(\cdot,\cdot;G\right)$,
where $H_{\bullet}$ is the ordinary homology functor of chain complexes.
The resulting functor is called \emph{McCord homology}.
\begin{rem}
Let $\left(X,A\right)$ be a standard topological pair. It may happen
that
\[
C_{p}^{M}\left(A;G\right)\neq\Set{\sum_{i}g_{i}\sigma_{i}\in C_{p}^{M}\left(X;G\right)|\sigma_{i}\subseteq\ns{A}\text{ for all }i}.
\]
For instance, consider the topological pair $\left(\mathbb{R},\mathbb{R}_{+}\right)$.
Given positive infinitesimals $a_{0},\ldots,a_{p}\in\ns{\mathbb{R}_{+}}$,
the $p$\textendash tuple $\left(a_{0},\ldots,a_{p}\right)$ is a
microsimplex of $\mathbb{R}$, although not a microsimplex of $\mathbb{R}_{+}$.
Because of this, the short exact sequence
\[
\xymatrix{0\ar[r] & C_{\bullet}^{M}\left(A;G\right)\ar[r] & C_{\bullet}^{M}\left(X;G\right)\ar[r] & C_{\bullet}^{M}\left(X,A;G\right)\ar[r] & 0}
\]
may not split. However, if $A$ is closed in $X$, the above two sets
are equal, and the short exact sequence splits.
\end{rem}

Next, recall the definition of Korppi homology \cite{Kor07,Kor12}.
Korppi homology is a homology theory on $\CR_{2}$ (or, more precisely,
on $\CR_{2}\cap\mathbb{U}$). Korppi's definition is similar to McCord's.
The only difference lies in the definition of the term `microsimplex'.
Let $X$ be a standard completely regular space and $G$ an internal
abelian group. Two points $x,y\in\ns{X}$ are said to be \emph{infinitely
close} (denoted by $x\sim_{X}y$) if for each normal cover $\lambda\in\mathcal{N}_{X}$,
$x$ and $y$ are $\ns{\lambda}$-close, i.e., $\set{x,y}\subseteq U$
for some $U\in\ns{\lambda}$. The \emph{normal monad} of $x\in\ns{X}$
is the set
\[
\mu_{X}^{n}\left(x\right):=\set{y\in\ns{X}|x\sim_{X}y}=\bigcap_{\lambda\in\mathcal{N}_{X}}\mathrm{St}\left(x,\ns{\lambda}\right),
\]
where $\mathrm{St}\left(-\right)$ denotes the star-neighbourhood.
For each $x\in X$, the normal monad of $x$ is equal to the monad
of $x$ \cite[Lemma 6-(2)]{Kor12}, while the normal monad makes sense
for $x\in\ns{X}\setminus X$. We say that a member $\left(a_{0},\ldots,a_{p}\right)$
of $\ns{X^{p+1}}$ is a \emph{$p$\textendash microsimplex} if $\set{a_{0},\ldots,a_{p}}\subseteq\mu_{X}^{n}\left(x\right)$
holds for some $x\in\ns{X}$, or equivalently, if $a_{i}\sim_{X}a_{j}$
for all $0\leq i,j\leq p$. The rest of the definition is the same
as McCord's one. We denote by $C_{\bullet}^{K}$ the Korppi microchain
complex functor and by $H_{\bullet}^{K}$ the Korppi homology functor.
\begin{rem}
\label{rem:splitting-of-K-hom}Let $\left(X,A\right)$ be a standard
completely regular pair. It is possible that $x\sim_{X}y$ but $x\nsim_{A}y$
for some $x,y\in\ns{A}$ (see \cite[Remark 8]{Kor12}). The short
exact sequence
\[
\xymatrix{0\ar[r] & C_{\bullet}^{K}\left(A;G\right)\ar[r] & C_{\bullet}^{K}\left(X;G\right)\ar[r] & C_{\bullet}^{K}\left(X,A;G\right)\ar[r] & 0}
\]
may not split. We say that $A$ is \emph{normally embedded} in $X$
if for every normal cover $\mu$ of $A$, there exists a normal cover
$\lambda$ of $X$ such that $\set{U\cap A|U\in\lambda}$ refines
$\mu$. If $A$ is normally embedded in $X$, then $\sim_{A}$ agrees
with $\sim_{X}$ on $\ns{A}$ \cite[Lemma 9]{Kor12}, and the above
sequence splits.
\end{rem}

Finally, we recall the definition of $\mu$-homology \cite{Ima16}.
$\mu$-homology is a homology theory on $\Unif_{2}$ (or, more precisely,
on $\Unif_{2}\cap\mathbb{U}$). The definition of $\mu$-homology
is similar to that of Korppi homology. Let $X$ be a standard uniform
space and $G$ an internal abelian group. Two points $x,y\in\ns{X}$
are said to be \emph{infinitely close} (denoted by $x\approx_{X}y$)
if for each uniform cover $\lambda\in\mathcal{U}_{X}$, $x$ and $y$
are $\ns{\lambda}$-close, or equivalently, if $x\mathrel{\ns{U}}y$
holds for any entourage $U\in\mathcal{E}_{X}$. The\emph{ uniform
monad} of $x\in\ns{X}$ is the set
\begin{align*}
\mu_{X}^{u}\left(x\right) & :=\set{y\in\ns{X}|x\approx_{X}y}\\
 & =\bigcap_{\lambda\in\mathcal{U}_{X}}\mathrm{St}\left(x,\ns{\lambda}\right)\\
 & =\bigcap_{U\in\mathcal{E}_{X}}\ns{U}\left[x\right],
\end{align*}
where $U\left[x\right]:=\set{y\in X|\left(x,y\right)\in U}$. (We
will also use the notation $U\left[A\right]:=\set{y\in X|\exists x\in A.\left(x,y\right)\in U}$
throughout.) The uniform monad of $x$ coincides with the monad of
$x$ for $x\in X$. We say that a member $\left(a_{0},\ldots,a_{p}\right)$
of $\ns{X^{p+1}}$ is a \emph{$p$\textendash microsimplex} if $\set{a_{0},\ldots,a_{p}}\subseteq\mu_{X}^{u}\left(x\right)$
holds for some $x\in\ns{X}$, or equivalently, if $a_{i}\approx_{X}a_{j}$
for all $0\leq i,j\leq p$. The rest of the definition is the same
as McCord's and Korppi's. We denote by $C_{\bullet}^{\mu}$ the $\mu$-microchain
complex functor and by $H_{\bullet}^{\mu}$ the $\mu$-homology functor.
\begin{rem}
In contrast to the previous case (\prettyref{rem:splitting-of-K-hom}),
$\approx_{A}$ agrees with $\approx_{X}$ on $\ns{A}$ for each standard
uniform pair $\left(X,A\right)$. Hence the short exact sequence
\[
\xymatrix{0\ar[r] & C_{\bullet}^{\mu}\left(A;G\right)\ar[r] & C_{\bullet}^{\mu}\left(X;G\right)\ar[r] & C_{\bullet}^{\mu}\left(X,A;G\right)\ar[r] & 0}
\]
always splits (see \cite[Proposition 2]{Ima16}).
\end{rem}

The excision property of $\mu$-homology is not proved in the preceding
paper \cite{Ima16}, whilst a weak form of the excision is (see \cite[Proposition 3]{Ima16}).
In the rest of this section, we prove the \emph{full} excision property
of $\mu$-homology.
\begin{defn}
Let $X$ be a uniform space. Let $A$ and $B$ be subsets of $X$.
We say that $A$ is \emph{strongly contained in $B$} (and we write
$A\Subset B$) if $\mathrm{St}\left(A,\lambda\right)\subseteq B$
for some $\lambda\in\mathcal{U}_{X}$.
\end{defn}

\begin{lem}
\label{lem:n-c-of-to-be-strong-inclusion}Let $X$ be a standard uniform
space. Let $A$ and $B$ be subsets of $X$. Then $A\Subset B$ if
and only if $\mu_{X}^{u}\left(\ns{A}\right)\subseteq\ns{B}$, where
$\mu_{X}^{u}\left(\ns{A}\right):=\bigcup_{a\in\ns{A}}\mu_{X}^{u}\left(a\right)$.
\end{lem}

\begin{proof}
Suppose that $\mu_{X}^{u}\left(\ns{A}\right)\subseteq\ns{B}$. By
weak saturation, we can find an $\lambda\in\ns{\mathcal{U}_{X}}$
such that $\lambda$ refines $\ns{\nu}$ for all $\nu\in\mathcal{U}_{X}$.
Then $\mathrm{St}\left(\ns{A},\lambda\right)\subseteq\mu_{X}^{u}\left(\ns{A}\right)\subseteq\ns{B}$.
By transfer, we see that $A\Subset B$. Conversely, suppose $A\Subset B$.
Let $\lambda\in\mathcal{U}_{X}$ be with $\mathrm{St}\left(A,\lambda\right)\subseteq B$.
Then, by transfer, we have that $\mu_{X}^{u}\left(\ns{A}\right)\subseteq\ns{\left(\mathrm{St}\left(A,\lambda\right)\right)}\subseteq\ns{B}$.
\end{proof}
\begin{prop}
\label{prop:Excision-of-mu-hom}Let $X$ be a standard uniform space.
Let $A$ and $B$ be subsets of $X$. If $X\setminus A\Subset B$
(or $X\setminus B\Subset A$), then the inclusion map $i\colon\left(A,A\cap B\right)\hookrightarrow\left(X,B\right)$
induces an isomorphism $H_{\bullet}^{\mu}\left(i;G\right)\colon H_{\bullet}^{\mu}\left(A,A\cap B;G\right)\cong H_{\bullet}^{\mu}\left(X,B;G\right)$.
\end{prop}

\begin{proof}
The proof is a slight modification of the proof of \cite[Proposition 3]{Ima16}.
It suffices to show that the following two inclusions hold:
\begin{enumerate}
\item $C_{p}^{\mu}\left(A;G\right)\cap C_{p}^{\mu}\left(B;G\right)\subseteq C_{p}^{\mu}\left(A\cap B;G\right)$,
\item $C_{p}^{\mu}\left(X;G\right)\subseteq C_{p}^{\mu}\left(A;G\right)+C_{p}^{\mu}\left(B;G\right)$.
\end{enumerate}
The first inclusion is trivial. We only need to prove that each microsimplex
$\sigma$ of $X$ is contained in either $\ns{A}$ or $\ns{B}$. Suppose
that $\sigma$ is not contained in $\ns{A}$. Therefore, $\sigma$
intersects $\ns{\left(X\setminus A\right)}$. All the vertices of
$\sigma$ are in $\mu_{X}^{u}\left(\ns{\left(X\setminus A\right)}\right)$.
By \prettyref{lem:n-c-of-to-be-strong-inclusion}, $\sigma$ is contained
in $\ns{B}$.

\end{proof}
\begin{cor}
Let $X$ be a standard uniform space. Let $A$ and $U$ be subsets
of $X$. If $U\Subset A$, then the inclusion map $i\colon\left(X\setminus U,A\setminus U\right)\hookrightarrow\left(X,A\right)$
induces an isomorphism $H_{\bullet}^{\mu}\left(i;G\right)\colon H_{\bullet}^{\mu}\left(X\setminus U,A\setminus U;G\right)\cong H_{\bullet}^{\mu}\left(X,A;G\right)$.
\end{cor}

\begin{proof}
By applying \prettyref{prop:Excision-of-mu-hom} to the triad $\left(X,X\setminus U,A\right)$,
we obtain the desired result.
\end{proof}

\section{\label{sec:Rel-to-Korppi-homology}Relationship among the microsimplicial
homology theories}

In the first half of this section, we discuss the relationship among
the microsimplicial homology theories. In \prettyref{subsec:McCord-homology-and-mu-homology},
we prove the equality of McCord homology and $\mu$-homology on the
category of compact topological spaces. In \prettyref{subsec:Korppi-homology-and-mu-homology},
we prove the equality of Korppi homology and $\mu$-homology on the
category of fine uniform spaces. $\mu$-homology inherits many properties
from Korppi homology. In the rest of this section, we present two
examples of such properties, the embeddability of the Vietoris-type
homology (\prettyref{subsec:embedding-of-V-hom}) and the continuity
with respect to resolutions (\prettyref{subsec:Continuity-of-mu-homology}).

\subsection{\label{subsec:McCord-homology-and-mu-homology}McCord homology and
$\mu$-homology}

In \cite{Ima16}, the author stated (without detailed proof) that
$\mu$-homology and McCord homology are completely the same for all
compact uniform spaces. Here we give a detailed proof.

Let $U\colon\Unif\to\CR$ be the forgetful (topologisation) functor,
where $\Unif$ is the category of uniform spaces, and $\CR$ is the
category of completely regular spaces.
\begin{lem}
Let $G$ be an internal abelian group. Let $X$ be a standard uniform
space. Then $C_{\bullet}^{M}\left(UX;G\right)\subseteq C_{\bullet}^{\mu}\left(X;G\right)$.
If $X$ is compact, $C_{\bullet}^{\mu}\left(X;G\right)\subseteq C_{\bullet}^{M}\left(UX;G\right)$.
\end{lem}

\begin{proof}
Let $\left(a_{0},\ldots,a_{p}\right)\in\ns{X^{p+1}}$ be a McCord
microsimplex. Choose an $x\in X$ such that $\set{a_{0},\ldots,a_{p}}\subseteq\mu_{UX}\left(x\right)$.
Since $\mu_{UX}\left(x\right)=\mu_{X}^{u}\left(x\right)$, $\left(a_{0},\ldots,a_{p}\right)$
is a $\mu$-microsimplex. It follows that $C_{\bullet}^{M}\left(UX;G\right)\subseteq C_{\bullet}^{\mu}\left(X;G\right)$.

Assume that $X$ is compact. Let $\left(a_{0},\ldots,a_{p}\right)\in\ns{X^{p+1}}$
be a $\mu$-microsimplex. By the nonstandard characterisation of compactness
\cite[Theorem 4.1.13]{Rob66}, there exists an $x\in X$ such that
$a_{0}\in\mu_{UX}\left(x\right)=\mu_{X}^{u}\left(x\right)$. Since
$a_{i}\approx_{X}a_{0}\approx_{X}x$, it follows that $a_{i}\in\mu_{UX}\left(x\right)$
for all $0\leq i\leq p$. Therefore, $\left(a_{0},\ldots,a_{p}\right)$
is a McCord microsimplex. Hence $C_{\bullet}^{\mu}\left(X;G\right)\subseteq C_{\bullet}^{M}\left(UX;G\right)$.
\end{proof}
\begin{prop}
\label{prop:MF=00003DMu}Let $G$ be an internal abelian group. Let
$\left(X,A\right)$ be a standard compact uniform pair. Then, we have
that $C_{\bullet}^{M}\left(UX,A;G\right)=C_{\bullet}^{\mu}\left(X,A;G\right)$
and $H_{\bullet}^{M}\left(UX,A;G\right)=H_{\bullet}^{\mu}\left(X,A;G\right)$.
\end{prop}

This equality does not hold for some non-compact uniform spaces (see
\cite[Example 18]{Ima16}).

\subsection{\label{subsec:Korppi-homology-and-mu-homology}Korppi homology and
$\mu$-homology}

Let $F\colon\CR\to\Unif$ be the left adjoint functor of $U\colon\Unif\to\CR$.
More specifically, given a completely regular space $X$, $FX$ is
the uniform space whose underlying set is the same as $X$ and whose
uniformity is the finest uniformity compatible with the topology of
$X$.
\begin{lem}
Let $X$ be a standard completely regular space. For any $x,y\in\ns{X}$,
$x\sim_{X}y$ if and only if $x\approx_{FX}y$. Hence $\mu_{X}^{n}=\mu_{FX}^{u}$.
\end{lem}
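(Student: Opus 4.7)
The plan is to reduce the equivalence to the classical fact (due to Tukey) that the uniform covers of the fine uniformity $FX$ on a completely regular space $X$ are precisely the normal open covers of $X$. Once this identification is in hand, both $\sim_{X}$ and $\approx_{FX}$ can be rephrased in terms of the same family of covers and the equivalence becomes immediate.

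First I would record an auxiliary nonstandard characterisation valid for any uniform space $Y$: for $x,y\in{}^{\ast}Y$, one has $x\approx_{Y}y$ if and only if for every uniform cover $\mathcal{U}$ of $Y$ there exists $U\in{}^{\ast}\mathcal{U}$ with $x,y\in U$. For the forward direction, given a uniform cover $\mathcal{U}$, pick an entourage $V$ of $Y$ such that $\set{V\left[z\right]|z\in Y}$ refines $\mathcal{U}$; since $x\approx_{Y}y$ we have $y\in{}^{\ast}V\left[x\right]$, and by transfer ${}^{\ast}V\left[x\right]$ is contained in some element of ${}^{\ast}\mathcal{U}$. For the converse, given an entourage $V$, choose a symmetric entourage $W$ with $W\circ W\subseteq V$; the cover $\set{W\left[z\right]|z\in Y}$ is uniform, so by hypothesis $x,y$ lie in some ${}^{\ast}W\left[z\right]$, whence $\left(x,y\right)\in{}^{\ast}\left(W\circ W\right)\subseteq{}^{\ast}V$.

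Second, I would invoke the identification of uniform covers of $FX$ with normal open covers of $X$. One inclusion is essentially tautological: any normal cover admits, by definition, an infinite sequence of open star-refinements, so it belongs to the finest compatible uniformity. The reverse inclusion uses that every uniform cover of a uniform space is itself star-refined by another uniform cover; iterating produces a witnessing sequence of open covers, so the cover is normal.

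Combining the two ingredients the argument is then a one-line chain of equivalences: $x\approx_{FX}y$ iff for every uniform cover $\mathcal{U}$ of $FX$ there exists $U\in{}^{\ast}\mathcal{U}$ with $x,y\in U$, iff the same holds for every normal open cover of $X$, iff $x\sim_{X}y$. The only real obstacle is the second step, the standard Tukey-type characterisation of the fine uniformity; since this is a well-known piece of uniform-space theory I would simply cite it rather than reprove it. The rest is a routine translation between entourages and covering uniformities.
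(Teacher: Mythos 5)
Your proposal is correct and follows essentially the same route as the paper: the paper also hinges on the Tukey--Isbell identification of normal covers with uniform covers of the fine uniformity (citing Isbell's Theorem 20), and its two directions are just the inline version of your auxiliary cover-theoretic characterisation of $\approx$, translating between entourages $V[z]$ and members of ${}^{\ast}\mathcal{U}$ exactly as you do.
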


\begin{proof}
Immediately from the fact that every uniform (open) cover of $FX$
is a normal cover of $X$, and vice versa (see \cite[Theorem 20]{Isb64}).
\end{proof}
The definition of $\mu$-homology is very similar to that of Korppi
homology. Recall that the only difference lies in the definition of
the term `infinitely close to'. The following equivalence is now obvious.
\begin{prop}
\label{prop:K=00003DMuF}Let $G$ be an internal abelian group. Let
$\left(X,A\right)$ be a standard completely regular pair such that
$A$ is normally embedded in $X$. Then, we have that $C_{\bullet}^{K}\left(X,A;G\right)=C_{\bullet}^{\mu}\left(FX,A;G\right)$
and $H_{\bullet}^{K}\left(X,A;G\right)=H_{\bullet}^{\mu}\left(FX,A;G\right)$.
\end{prop}

\begin{cor}[{\cite[Remark 46]{Kor12}}]
Let $G$ be an internal abelian group. Let $\left(X,A\right)$ be
a standard compact completely regular pair. Then, we have that $C_{\bullet}^{K}\left(X,A;G\right)=C_{\bullet}^{M}\left(X,A;G\right)$
and $H_{\bullet}^{K}\left(X,A;G\right)=H_{\bullet}^{M}\left(X,A;G\right)$.
\end{cor}

Because of \prettyref{prop:K=00003DMuF}, $\mu$-homology is a generalisation
of Korppi homology from fine uniform spaces to general uniform spaces.
$\mu$-homology inherits many properties from Korppi homology (see
the next two subsections).
\begin{rem}
The equality between Korppi and $\mu$-homology does not hold for
some non-fine uniform spaces (e.g. the real line without the origin
equipped with the usual uniformity). In addition, the equality between
McCord and Korppi homology does not hold for some non-compact completely
regular spaces. The topologist's sine curve and the deleted comb space
(\prettyref{fig:ex-of-locally-complicated-spaces}) give counterexamples.
\begin{figure}
\subfloat[The topologist's sine curve]{\begin{tikzpicture}
\draw (5/1, 0) -- (5/2, 1);
\draw (5/2, 1) -- (5/4, -1);
\foreach \n in {2, ..., 25}
	\draw ({5 / (2 * \n)}, -1) -- ({5 / (2 * \n + 1)}, 1) -- ({5 / (2 * \n + 2)}, -1);
\fill (0, 0) circle[radius=1pt] node [left=2pt] {$\left(0,0\right)$};
\end{tikzpicture}

}

\subfloat[The deleted comb space]{\begin{tikzpicture}
\foreach \n in {1, ..., 25}
	\draw ({5 / \n}, -1) -- ({5 / \n}, 1);
\draw (0, -1) -- (5, -1);
\fill (0, -1) circle[radius=1pt] node [left=2pt] {$\left(0,0\right)$};
\fill (0, 1) circle[radius=1pt] node [left=2pt] {$\left(0,1\right)$};
\end{tikzpicture}}\caption{\label{fig:ex-of-locally-complicated-spaces}}
\end{figure}
\end{rem}

\subsection{\label{subsec:embedding-of-V-hom}Natural embeddings of Vietoris-type
into microsimplicial homology}

Korppi proved that Vietoris homology with standard coefficients can
be embedded into Korppi homology with nonstandard coefficients for
all paracompact spaces \cite[Theorem 76]{Kor12}. Actually, Korppi
homology is better related with \emph{normal} Vietoris homology than
with Vietoris homology. For example, normal Vietoris homology can
be embedded into Korppi homology without any extra condition.
\begin{thm}
Let $G$ be a standard abelian group. Let $\left(X,A\right)$ be a
standard completely regular pair such that $A$ is normally embedded
in $X$. Then there exists a monomorphism $\check{H}_{\bullet}^{n}\left(X,A;G\right)\to H_{\bullet}^{K}\left(X,A;\ns{G}\right)$
natural in $\left(X,A\right)$.
\end{thm}

\begin{proof}
Let $C_{\bullet}$ denote the ordinary chain complex functor of simplicial
pairs. For each $\lambda\in\mathcal{N}_{X}$, let $i_{\lambda}\colon H_{\bullet}\left(X_{\lambda},A_{\lambda};G\right)\to\ns{\left(H_{\bullet}\left(X_{\lambda},A_{\lambda};G\right)\right)}$
be the elementary embedding. These maps induce a limiting map $i_{X,A}:=\lim_{\lambda\in\mathcal{N}_{X}}i_{\lambda}\colon\lim_{\lambda\in\mathcal{N}_{X}}H_{\bullet}\left(X_{\lambda},A_{\lambda};G\right)\to\lim_{\lambda\in\mathcal{N}_{X}}\ns{\left(H_{\bullet}\left(X_{\lambda},A_{\lambda};G\right)\right)}$.
It is easy to see that $i_{X,A}$ is a monomorphism natural in $\left(X,A\right)$.
The domain $\lim_{\lambda\in\mathcal{N}_{X}}H_{\bullet}\left(X_{\lambda},A_{\lambda};G\right)$
is precisely the same as $\check{H}_{\bullet}^{n}\left(X,A;G\right)$.
By \cite[Theorem 48]{Kor12}, the codomain is
\[
\begin{aligned}\lim_{\lambda\in\mathcal{N}_{X}}\ns{\left(H_{\bullet}\left(X_{\lambda},A_{\lambda};G\right)\right)} & =\lim_{\lambda\in\mathcal{N}_{X}}\ns{\left(H_{\bullet}\left(C_{\bullet}\left(X_{\lambda},A_{\lambda};G\right)\right)\right)}\\
 & =H_{\bullet}\lim_{\lambda\in\mathcal{N}_{X}}\ns{\left(C_{\bullet}\left(X_{\lambda},A_{\lambda};G\right)\right)}\\
 & =H_{\bullet}\bigcap_{\lambda\in\mathcal{N}_{X}}\ns{\left(C_{\bullet}\left(X_{\lambda},A_{\lambda};G\right)\right)}\\
 & =H_{\bullet}C_{\bullet}^{K}\left(X,A;\ns{G}\right)\\
 & =H_{\bullet}^{K}\left(X,A;\ns{G}\right).
\end{aligned}
\]
Hence $i_{X,A}$ is a desired natural monomorphism $\check{H}_{\bullet}^{n}\left(X,A;G\right)\to H_{\bullet}^{K}\left(X,A;\ns{G}\right)$.
\end{proof}
One can prove the following uniform analogue in the same way.
\begin{thm}
Let $G$ be a standard abelian group. Let $\left(X,A\right)$ be a
standard uniform pair. Then there exists a monomorphism $\check{H}_{\bullet}^{u}\left(X,A;G\right)\to H_{\bullet}^{\mu}\left(X,A;\ns{G}\right)$
natural in $\left(X,A\right)$.
\end{thm}

\subsection{\label{subsec:Continuity-of-mu-homology}Continuity of $\mu$-homology
with respect to uniform resolutions}

Korppi homology is continuous with respect to resolutions (\cite[Theorem 71]{Kor12}).
Analogously, $\mu$-homology is continuous with respect to uniform
resolutions.
\begin{defn}
Let $\bm{X}:=\left(I,X_{i},\pi_{ij}\right)$ be an inverse system
of uniform spaces and let $\pi\colon X\to\bm{X}$ be a cone over $\bm{X}$.
Then $\bm{X}$ (together with $\pi$) is called a \emph{uniform resolution}
of $X$ if the following conditions hold:
\begin{description}
\item [{(UR1)}] for each $\lambda\in\mathcal{U}_{X}$ there exist an $i\in I$
and a $\mu\in\mathcal{U}_{X_{i}}$ such that $\pi_{i}^{-1}\mu$ refines
$\lambda$;
\item [{(UR2)}] for each $i\in I$ and each $\lambda\in\mathcal{U}_{X_{i}}$
there exists an $j\in I$ such that $\pi_{ij}\left(X_{j}\right)\subseteq\mathrm{St}\left(\pi_{i}\left(X\right),\lambda\right)$.
\end{description}
Let $\left(\bm{X},\bm{A}\right):=\left(I,X_{i},A_{i},\pi_{ij}\right)$
be an inverse system of uniform pairs and let $\pi\colon\left(X,A\right)\to\left(\bm{X},\bm{A}\right)$
be a cone over $\left(\bm{X},\bm{A}\right)$. Then $\left(\bm{X},\bm{A}\right)$
is called a uniform resolution of $\left(X,A\right)$ if $\left(I,X_{i},\pi_{ij}\right)$
and $\left(I,A_{i},\pi_{ij}\right)$ are uniform resolutions of $X$
and $A$, respectively.

\end{defn}

\begin{thm}
\label{thm:continuity-of-mu-homology}Let $\left(\bm{X},\bm{A}\right):=\left(I,X_{i},A_{i},\pi_{ij}\right)$
be a standard inverse system of uniform pairs. Let $\pi\colon\left(X,A\right)\to\left(\bm{X},\bm{A}\right)$
be a standard cone over $\left(\bm{X},\bm{A}\right)$. If $\left(\bm{X},\bm{A}\right)$
is a uniform resolution of $\left(X,A\right)$, then $\pi$ induces
an isomorphism $H_{\bullet}^{\mu}\left(X,A;G\right)\cong\lim_{i\in I}H_{\bullet}^{\mu}\left(X_{i},A_{i};G\right)$.
\end{thm}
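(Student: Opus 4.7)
The plan is to exhibit the canonical map
$\Phi\colon H_\bullet^\mu(X,A;G)\to\lim_{i\in I}H_\bullet^\mu(X_i,A_i;G)$,
induced by functoriality of $\mu$-homology applied to the cone $\pi$,
as an isomorphism. Recall that $\mu$-homology is the homology of the
internal relative microsimplex complex $C_\bullet^\mu$, whose $p$-chains
are internal formal combinations of tuples
$(x_0,\ldots,x_p)\in({}^{\ast}X)^{p+1}$ of mutually infinitely close
points; since each $\pi_i$ is uniformly continuous, $\pi_{i\#}$ sends
microsimplices to microsimplices, and these chain maps assemble into a
cone that produces $\Phi$ by the universal property of $\lim$.

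The first step is to translate the two resolution conditions
nonstandardly, in the spirit of the lemma comparing $\sim_X$ and
$\approx_{FX}$ for completely regular spaces. Using the characterisation
of infinite closeness via uniform covers, condition~(1) implies the
\emph{downward principle}: for $x,y\in{}^{\ast}X$, if
$\pi_i(x)\approx_{X_i}\pi_i(y)$ for every $i\in I$, then $x\approx_X y$
(the converse is mere uniform continuity). By $|I|^+$-saturation
together with the directedness of $I$, there is an internal index
$i^*\in{}^{\ast}I$ with $i\le i^*$ for every standard $i\in I$;
combining the transfer of condition~(2) with saturation then yields the
\emph{upward lifting}: for every $y\in{}^{\ast}X_{i^*}$ there is
$x\in{}^{\ast}X$ with ${}^{\ast}p_{i^*i}(y)\approx_{X_i}\pi_i(x)$ for
all $i\in I$, and $x$ may be taken in ${}^{\ast}A$ whenever
$y\in{}^{\ast}A_{i^*}$ (using that $\bm{A}$ is itself a resolution of
$A$).

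For surjectivity I would start with a thread $([\sigma_i])_{i\in I}$
and choose cycle representatives $\sigma_i$ in the internal relative
microsimplex complexes. The standard compatibility
$[{}^{\ast}p_{ji\#}\sigma_j]=[\sigma_i]$, upgraded by a saturation
argument to the internal complex on
$({}^{\ast}X_{i^*},{}^{\ast}A_{i^*})$, yields an internal relative
cycle $\sigma^*$ whose image under each ${}^{\ast}p_{i^*i\#}$ is
homologous to $\sigma_i$ modulo $C_\bullet^\mu(A_i;G)$. Applying the
upward lifting vertex-by-vertex to $\sigma^*$ produces a chain
$\sigma$ on $^{\ast}X$; the downward principle, combined with the
uniform continuity of ${}^{\ast}p_{i^*i}$, ensures that the lifted
tuples are genuine microsimplices of $^{\ast}X$, and by construction
$\pi_{i\#}\sigma$ is homologous to $\sigma_i$ for every $i$, so
$\Phi[\sigma]=([\sigma_i])$. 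Injectivity is dual: if each
$\pi_{i\#}\sigma$ bounds modulo $C_\bullet^\mu(A_i;G)$, saturate the
boundings to a single internal bounding over $^{\ast}X_{i^*}$ and lift
it via~(2) to a bounding of $\sigma$ on $^{\ast}X$.

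The hard part will be the coherent selection of $\sigma^*$. The
standard compatibility of the $\sigma_i$'s holds only modulo boundaries
and modulo chains on $A_i$, so producing a single internal relative
cycle $\sigma^*$ on $({}^{\ast}X_{i^*},{}^{\ast}A_{i^*})$ matching
every standard $\sigma_i$ simultaneously requires saturating a
carefully crafted internally definable family of ``candidate $\sigma^*$
making the diagram commute up to boundary at level $i$'' indexed by
finite subsets of $I$. The subsequent vertex-wise lift must preserve
the microsimplex property, which is exactly where the nonstandard form
of~(1) is indispensable: it alone lets one conclude mutual infinite
closeness of the lifts in $^{\ast}X$ from mutual infinite closeness of
their $\pi_i$-images for every $i$.
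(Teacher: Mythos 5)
Your plan follows essentially the same route as the paper, which itself gives no self-contained argument: it declares the proof ``completely analogous'' to Korppi's Theorem 71 and supplies only the one ingredient that must change, namely the nonstandard characterisation of condition (1) of a uniform resolution --- precisely your downward principle, stated in the paper in four equivalent forms involving the set $J$ of internal indices dominating $I$ and the relation $\approx_{\bm{X}}$ on ${}^{\ast}X_{j}$. Your use of condition (2) plus saturation for the upward lifting, and of saturation over finite subsets of $I$ to coalesce a thread into a single internal relative cycle over an index $i^{*}\in J$, is the expected Korppi-style argument, and your identification of the coherent selection of $\sigma^{*}$ as the crux is accurate. Two points in your sketch need repair rather than mere expansion. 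First, a microchain has hyperfinitely many vertices, so ``vertex-by-vertex'' lifting is not enough; you need an internal lifting function $\ell:{}^{\ast}X_{i^{*}}\to{}^{\ast}X$ with ${}^{\ast}p_{i^{*}i}(y)\approx_{X_{i}}{}^{\ast}\pi_{i}\left(\ell\left(y\right)\right)$ for all $y$ and all standard $i$, which exists by running the same saturation argument over internal functions. Second, in the injectivity step the lifted bounding has boundary $\ell_{\#}\left({}^{\ast}\pi_{i^{*}\#}\sigma\right)$ rather than $\sigma$ itself: you still owe a prism (contiguity) argument showing that $\sigma$ and $\ell_{\#}\left({}^{\ast}\pi_{i^{*}\#}\sigma\right)$ are homologous microchains rel ${}^{\ast}A$, which does follow from the downward principle since corresponding vertices have infinitely close images under every ${}^{\ast}\pi_{i}$, but must be said. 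With these repairs the plan is sound and matches the paper's intended proof.
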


The proof is completely analogous to Korppi's. For example, we should
replace \cite[Lemma 60]{Kor12} and \cite[Lemma 61]{Kor12} with the
following two lemmas.
\begin{defn}
Let $\bm{X}:=\left(I,X_{i},\pi_{ij}\right)$ be a standard inverse
system of uniform spaces. We denote $J:=\set{j\in\ns{I}|i\leq j\text{ for all }i\in I}$.
Note that $J$ is nonempty by weak saturation. For $x,y\in\ns{X_{j}}\ \left(j\in J\right)$,
we write $x\approx_{\bm{X}}y$ if $\ns{\pi_{ij}}\left(x\right)\approx_{X_{i}}\ns{\pi_{ij}}\left(y\right)$
holds for any $i\in I$.
\end{defn}

\begin{lem}
Let $\bm{X}:=\left(I,X_{i},\pi_{ij}\right)$ be a standard inverse
system of uniform spaces and let $\pi\colon X\to\bm{X}$ be a standard
cone over $\bm{X}$. The following are equivalent:
\begin{enumerate}
\item (UR1);
\item for any $x,y\in\ns{X}$, $x\approx_{X}y$ if and only if $\ns{\pi_{i}}\left(x\right)\approx_{X_{i}}\ns{\pi_{i}}\left(y\right)$
for all $i\in I$;
\item for any $x,y\in\ns{X}$, $x\approx_{X}y$ if and only if $\ns{\pi_{j}}\left(x\right)\approx_{\bm{X}}\ns{\pi_{j}}\left(y\right)$
for all $j\in J$;
\item for any $x,y\in\ns{X}$, $x\approx_{X}y$ if and only if $\ns{\pi_{j}}\left(x\right)\approx_{\bm{X}}\ns{\pi_{j}}\left(y\right)$
for some $j\in J$.
\end{enumerate}
\end{lem}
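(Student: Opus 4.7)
The plan is to establish $(1) \Leftrightarrow (2)$ as the substantive equivalence and then obtain $(2) \Leftrightarrow (3) \Leftrightarrow (4)$ as a formal consequence of the cone identity $^{\ast}p_{ji}\circ{}^{\ast}\pi_j={}^{\ast}\pi_i$. First I would handle $(1) \Rightarrow (2)$. The forward implication $x \approx_X y \Rightarrow {}^{\ast}\pi_i(x) \approx_{X_i} {}^{\ast}\pi_i(y)$ is immediate from uniform continuity of $\pi_i$ together with \prettyref{lem:uniform-S-cont=00003Duniform-ed-cont}. For the reverse implication, given a uniform cover $\mathcal{U}$ of $X$, apply (1) to obtain an $i \in I$ and $\mathcal{V} \in \mathrm{Cov}_u(X_i)$ with $\mathcal{U} \preceq \pi_i^{-1}\mathcal{V}$; since $^{\ast}\pi_i(x) \approx_{X_i} {}^{\ast}\pi_i(y)$, some $V \in {}^{\ast}\mathcal{V}$ contains both images, so $^{\ast}\pi_i^{-1}(V) \in {}^{\ast}(\pi_i^{-1}\mathcal{V})$ contains both $x,y$ and is therefore contained in some member of $^{\ast}\mathcal{U}$. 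Quantifying over $\mathcal{U}$ yields $x \approx_X y$.

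The direction $(2) \Rightarrow (1)$ is the main obstacle and requires a saturation argument. Arguing contrapositively, suppose (1) fails for some $\mathcal{U} \in \mathrm{Cov}_u(X)$. For every pair $(i,\mathcal{V})$ with $\mathcal{V} \in \mathrm{Cov}_u(X_i)$, non-refinement produces $x_{i,\mathcal{V}}, y_{i,\mathcal{V}} \in X$ whose images lie in a common member of $\mathcal{V}$ but which themselves share no common member of $\mathcal{U}$. Consider the internal conditions $\Phi_{i,\mathcal{V}}(x,y) \equiv \exists V\in{}^{\ast}\mathcal{V}\ \bigl({}^{\ast}\pi_i(x), {}^{\ast}\pi_i(y) \in V\bigr)$ together with $\Psi(x,y) \equiv \forall U \in {}^{\ast}\mathcal{U}\ \neg(x \in U \wedge y \in U)$. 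Any finite subfamily $\Phi_{i_1,\mathcal{V}_1},\ldots,\Phi_{i_n,\mathcal{V}_n}$ can be jointly realised along with $\Psi$ by picking $k \geq i_1,\ldots,i_n$ and a witness pair for the cover $\bigvee_m p_{k,i_m}^{-1}\mathcal{V}_m$ on $X_k$, so the family has the finite intersection property. By $|\mathbb{U}|^+$-saturation there exist $x,y \in {}^{\ast}X$ realising the whole collection; these witness $^{\ast}\pi_i(x) \approx_{X_i} {}^{\ast}\pi_i(y)$ for every $i \in I$ while $x \not\approx_X y$, contradicting (2).

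For the remaining equivalences, first note that $J \neq \varnothing$: the family of internal sets $\{i' \in {}^{\ast}I : j \leq i'\}_{j \in I}$ has the finite intersection property because $I$ is directed, so $|\mathbb{U}|^+$-saturation provides a common element. Given any $j \in J$ and $i \in I$, the identity $^{\ast}\pi_i = {}^{\ast}p_{ji}\circ{}^{\ast}\pi_j$ (transfer of the cone relation) shows that the statement ``$^{\ast}\pi_j(x) \approx_{\bm{X}} {}^{\ast}\pi_j(y)$'' unpacks, by the very definition of $\approx_{\bm{X}}$, to ``$^{\ast}\pi_i(x) \approx_{X_i} {}^{\ast}\pi_i(y)$ for all $i \in I$''. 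This description is independent of the choice of $j \in J$, so $(2) \Leftrightarrow (3)$ and trivially $(3) \Leftrightarrow (4)$, completing the proof.
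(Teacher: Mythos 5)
Your strategy is essentially the paper's: prove $(1)\Leftrightarrow(2)$ directly, with the forward half of $(2)$ coming from uniform continuity, the reverse half from the refinement hypothesis, and $(2)\Rightarrow(1)$ by contraposition plus saturation; then reduce $(3)$ and $(4)$ to $(2)$ using the nonemptiness of $J$ and the cone identity $^{\ast}p_{ji}\circ{}^{\ast}\pi_{j}={}^{\ast}\pi_{i}$. That last reduction is correct and in fact cleaner than the paper's case-by-case verification. But there is a genuine gap in your $(2)\Rightarrow(1)$ argument, at the sentence asserting that non-refinement ``produces $x_{i,\mathcal{V}},y_{i,\mathcal{V}}$ whose images lie in a common member of $\mathcal{V}$ but which themselves share no common member of $\mathcal{U}$.'' The failure of $\mathcal{U}\preceq\pi_{i}^{-1}\mathcal{V}$ only yields a set $V\in\pi_{i}^{-1}\mathcal{V}$ contained in no single member of $\mathcal{U}$; it does not follow that $V$ contains two points lying in no common member of $\mathcal{U}$. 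For instance, the cover $\left\{ \left\{ a,b\right\} ,\left\{ b,c\right\} ,\left\{ a,c\right\} \right\}$ of a three-point set has no member containing the whole set, yet every pair of points lies in a common member. So your family $\Phi_{i_{1},\mathcal{V}_{1}},\ldots,\Phi_{i_{n},\mathcal{V}_{n}},\Psi$ need not be finitely satisfiable as claimed, and the saturation step has nothing to apply to.

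The paper repairs exactly this point by inserting a star-refinement. Fix a uniform star-refinement $\mathcal{U}'$ of $\mathcal{U}$. Given the bad $V\in\pi_{j}^{-1}\mathcal{V}$, pick $x\in V$; since $\mathrm{St}\left(x,\mathcal{U}'\right)$ is contained in some member of $\mathcal{U}$ while $V$ is not, one may choose $y\in V\setminus\mathrm{St}\left(x,\mathcal{U}'\right)$, and then no member of $\mathcal{U}'$ contains both $x$ and $y$. Replacing your condition $\Psi$ (``no common member of $^{\ast}\mathcal{U}$'') by the corresponding condition for $\mathcal{U}'$ restores finite satisfiability, and since $\mathcal{U}'$ is again a uniform cover of $X$, the pair produced by saturation still witnesses $x\not\approx_{X}y$ while $^{\ast}\pi_{i}\left(x\right)\approx_{X_{i}}{}^{\ast}\pi_{i}\left(y\right)$ for all $i$. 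With that single modification your proof goes through and coincides with the paper's.
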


\begin{proof}
\begin{description}
\item [{$\left(1\right)\Rightarrow\left(2\right)$}] Suppose that $x\not\approx_{X}y$.
We can find a $\lambda\in\mathcal{U}_{X}$ such that $x$ and $y$
are not $\ns{\lambda}$-near. There exist an $i\in I$ and a $\mu\in\mathcal{U}_{X_{i}}$
such that $\pi_{i}^{-1}\mu$ refines $\lambda$. $x$ and $y$ are
not $\ns{\left(\pi_{i}^{-1}\mu\right)}$-near. Therefore $\ns{\pi_{i}}\left(x\right)$
and $\ns{\pi_{i}}\left(y\right)$ are not $\ns{\mu}$-near. Hence
$\ns{\pi_{i}}\left(x\right)\not\approx_{X_{i}}\ns{\pi_{i}}\left(y\right)$.
The reverse direction immediately follows from the uniform continuity
of $\pi_{i}$.
\item [{$\left(2\right)\Rightarrow\left(1\right)$}] Suppose, on the contrary,
that there exists a $\lambda\in\mathcal{U}_{X}$ such that for any
$i\in I$ and any uniform cover $\mu\in\mathcal{U}_{X_{i}}$, $\pi_{i}^{-1}\mu$
does not refine $\lambda$. Let $\lambda'$ be a uniform star-refinement
of $\lambda$.

Let $i\in I$ and $\mu\in\mathcal{U}_{X_{i}}$. Since $\pi_{i}^{-1}\mu$
does not refine $\lambda$, there exists a $V\in\pi_{i}^{-1}\mu$
such that $V$ is not contained in any member of $\lambda$. Clearly
$V\neq\varnothing$. Choose an $x_{i,\mu}\in V$. We have that $V\nsubseteq\mathrm{St}\left(x_{i,\mu},\lambda'\right)$.
Choose a $y_{i,\mu}\in V\setminus\mathrm{St}\left(x_{i,\mu},\lambda'\right)$.
Then, $x_{i,\mu}$ and $y_{i,\mu}$ are $\pi_{i}^{-1}\mu$-near but
not $\lambda'$-near.

Now, let $P$ be a finite set of all pairs $\left(i,\mu\right)$ such
that $i\in I$ and $\mu\in\mathcal{U}_{X_{i}}$. Fix an upper bound
$i'\in I$ of all $i$ with $\left(i,\mu\right)\in P$, and a common
refinement $\mu'$ of all $\pi_{ii'}^{-1}\mu$ with $\left(i,\mu\right)\in P$.
We can find $x_{P},y_{P}\in X$ such that $x_{P}$ and $y_{P}$ are
$\pi_{i'}^{-1}\mu'$-near but not $\lambda'$-near. $x_{P}$ and $y_{P}$
are $\pi_{i}^{-1}\mu$-near for all $\left(i,\mu\right)\in P$. By
weak saturation, there exist $x,y\in\ns{X}$ such that $x$ and $y$
are $\ns{\left(\pi_{i}^{-1}\mu\right)}$-near for all $i\in I$ and
$\mu\in\mathcal{U}_{X_{i}}$ but not $\ns{\lambda'}$-near. Hence
$\ns{\pi_{i}}\left(x\right)\approx_{X_{i}}\ns{\pi_{i}}\left(y\right)$
but $x\not\approx_{X}y$.
\item [{$\left(2\right)\Rightarrow\left(3\right)$}] Suppose $x\approx_{X}y$.
Let $j\in J$. Since $\pi_{i}$ is uniformly continuous, we have that
$\ns{\pi_{ij}}\left(\ns{\pi_{j}}\left(x\right)\right)=\ns{\pi_{i}}\left(x\right)\approx_{X_{i}}\ns{\pi_{i}}\left(y\right)=\ns{\pi_{ij}}\left(\ns{\pi_{j}}\left(y\right)\right)$
for all $i\in I$. Hence $\ns{\pi_{j}}\left(x\right)\approx_{\bm{X}}\ns{\pi_{j}}\left(y\right)$.
Next, suppose that $\ns{\pi_{j}}\left(x\right)\approx_{\bm{X}}\ns{\pi_{j}}\left(y\right)$
for all $j\in J$. Fix a $j_{0}\in J$. By the definition of $\approx_{\bm{X}}$,
for any $i\in I$ we have that
\begin{align*}
\ns{\pi_{i}}\left(x\right) & =\ns{\pi_{ij_{0}}}\left(\ns{\pi_{j_{0}}}\left(x\right)\right)\\
 & \approx_{X_{i}}\ns{\pi_{ij_{0}}}\left(\ns{\pi_{j_{0}}}\left(y\right)\right)\\
 & =\ns{\pi_{i}}\left(y\right).
\end{align*}
By (2) we have $x\approx_{X}y$.
\item [{$\left(3\right)\Rightarrow\left(4\right)$}] Suppose that $\ns{\pi_{j_{0}}}\left(x\right)\approx_{\bm{X}}\ns{\pi_{j_{0}}}\left(y\right)$
for some $j_{0}\in J$. Let $j\in J$. For any $i\in I$ we have that
\begin{align*}
\ns{\pi_{ij}}\left(\ns{\pi_{j}}\left(x\right)\right) & =\ns{\pi_{i}}\left(x\right)\\
 & =\ns{\pi_{ij_{0}}}\left(\ns{\pi_{j_{0}}}\left(x\right)\right)\\
 & \approx_{X_{i}}\ns{\pi_{ij_{0}}}\left(\ns{\pi_{j_{0}}}\left(y\right)\right)\\
 & =\ns{\pi_{i}}\left(y\right)\\
 & =\ns{\pi_{ij}}\left(\ns{\pi_{j}}\left(y\right)\right).
\end{align*}
Hence $\ns{\pi_{j}}\left(x\right)\approx_{\bm{X}}\ns{\pi_{j}}\left(y\right)$.
By (3) we have $x\approx_{X}y$. The reverse direction is trivial.
\item [{$\left(4\right)\Rightarrow\left(2\right)$}] Suppose that $\ns{\pi_{i}}\left(x\right)\approx_{X_{i}}\ns{\pi_{i}}\left(y\right)$
for all $i\in I$. Fix a $j_{0}\in J$. We have that $\ns{\pi_{j_{0}i}}\left(\ns{\pi_{j_{0}}}\left(x\right)\right)=\ns{\pi_{i}}\left(x\right)\approx_{X_{i}}\ns{\pi_{i}}\left(y\right)=\ns{\pi_{j_{0}i}}\left(\ns{\pi_{j_{0}}}\left(y\right)\right)$
for all $i\in I$. Hence $\ns{\pi_{j_{0}}}\left(x\right)\approx_{\bm{X}}\ns{\pi_{j_{0}}}\left(y\right)$.
By (4) we have $x\approx_{X}y$. The reverse direction immediately
follows from the uniform continuity of $\pi_{i}$.\qedhere
\end{description}
\end{proof}
\begin{lem}
Let $\bm{X}:=\left(I,X_{i},\pi_{ij}\right)$ be a standard inverse
system of uniform spaces and let $\pi\colon X\to\bm{X}$ be a standard
cone over $\bm{X}$. The following are equivalent:
\begin{enumerate}
\item (UR2);
\item for any $j\in J$ and any $i\in I$, every $x\in\ns{\pi_{ij}}\left(\ns{X_{j}}\right)$
is $\approx_{X_{i}}$-near to some $x'\in\ns{\pi_{i}}\left(\ns{X}\right)$;
\item for some $j\in J$ and any $i\in I$, every $x\in\ns{\pi_{ij}}\left(\ns{X_{j}}\right)$
is $\approx_{X_{i}}$-near to some $x'\in\ns{\pi_{i}}\left(\ns{X}\right)$;
\item for any $j\in J$, every $x\in\ns{X_{j}}$ is $\approx_{\bm{X}}$-near
to some $x'\in\ns{\pi_{j}}\left(\ns{X}\right)$;
\item for some $j\in J$, every $x\in\ns{X_{j}}$ is $\approx_{\bm{X}}$-near
to some $x'\in\ns{\pi_{j}}\left(\ns{X}\right)$.
\end{enumerate}
\end{lem}

\begin{proof}
$\left(2\right)\Rightarrow\left(3\right)$ and $\left(4\right)\Rightarrow\left(5\right)$
are trivial.
\begin{description}
\item [{$\left(1\right)\Rightarrow\left(2\right)$}] Let $j\in J$, $i\in I$
and $x\in\ns{\pi_{ji}}\left(\ns{X_{j}}\right)$. Let $P$ be a finite
subset of $\mathcal{U}_{X_{i}}$. Let $\lambda'$ be a common refinement
of the members of $P$. Choose an $i'\geq i$ such that $\pi_{ii'}\left(X_{i'}\right)\subseteq\mathrm{St}\left(\pi_{i}\left(X\right),\lambda'\right)$.
Since $\ns{\pi_{ij}}\left(\ns{X_{j}}\right)\subseteq\ns{\left(\pi_{ii'}\left(X_{i'}\right)\right)}\subseteq\ns{\left(\mathrm{St}\left(\pi_{i}\left(X\right),\lambda'\right)\right)}$,
$x$ is $\ns{\lambda'}$-near to some $x_{P}\in\ns{\pi_{i}}\left(\ns{X}\right)$.
Such an $x_{P}$ is $\ns{\lambda}$-near to $x$ for all $\lambda\in P$.
By saturation, there exists an $x'\in\ns{\pi_{i}}\left(\ns{X}\right)$
such that $x$ and $x'$ are $\ns{\lambda}$-near for all $\lambda\in\mathcal{U}_{X_{i}}$,
i.e., $x\approx_{X_{i}}x'$.
\item [{$\left(2\right)\Rightarrow\left(4\right)$ and $\left(3\right)\Rightarrow\left(5\right)$}] Let
$j\in J$. Suppose that for any $i\in I$ every $x\in\ns{\pi_{ij}}\left(\ns{X_{j}}\right)$
is $\approx_{X_{i}}$-near to some $x'\in\ns{\pi_{i}}\left(\ns{X}\right)$.
Let $x\in\ns{X_{j}}$. Given a finite set $P$ of all pairs $\left(i,\lambda\right)$
such that $i\in I$ and $\lambda\in\mathcal{U}_{X_{i}}$, choose an
upper bound $i'\in I$ of all $i$ with $\left(i,\lambda\right)\in P$
and a common refinement $\lambda'$ of all $\pi_{ii'}^{-1}\lambda$
with $\left(i,\lambda\right)\in P$. Since $\ns{\pi_{i'j}}\left(x\right)$
is $\ns{\lambda'}$-near to some points in $\ns{\pi_{i'}}\left(\ns{X}\right)$,
$x$ is $\ns{\pi_{i'j}^{-1}}\left(\ns{\lambda'}\right)$-near to some
$x_{P}\in\prescript{\ast}{}{\pi_{j}}\left(\prescript{\ast}{}{X}\right)$.
Such an $x_{P}$ is $\ns{\pi_{ij}^{-1}}\left(\ns{\lambda}\right)$-near
to $x$ for all $\left(i,\lambda\right)\in P$. By saturation, there
exists an $x'\in\ns{\pi_{j}}\left(\ns{X}\right)$ such that $x$ and
$x'$ are $\ns{\pi_{ij}}^{-1}\left(\ns{\lambda}\right)$-near for
all $\lambda\in\mathcal{U}_{X_{i}}$ and $i\in I$, i.e., $x\approx_{\bm{X}}x'$.
\item [{$\left(5\right)\Rightarrow\left(1\right)$}] Suppose that (1) does
not hold, i.e., there exist an $i\in I$ and a $\lambda\in\mathcal{U}_{X_{i}}$
such that $\pi_{ii'}\left(X_{i'}\right)\nsubseteq\mathrm{St}\left(\pi_{i}\left(X\right),\lambda\right)$
for all $i'\in I$. For each $i'\geq i$ choose an $x_{i'}\in X_{i'}$
such that $\pi_{ii'}\left(x_{i'}\right)\notin\mathrm{St}\left(\pi_{i}\left(X\right),\lambda\right)$.
Let $j\in J$. By transfer, we have that $\ns{x_{j}}\in\ns{X_{j}}$
and $\ns{\pi_{ij}}\left(\ns{x_{j}}\right)\notin\ns{\left(\mathrm{St}\left(\pi_{i}\left(X\right),\lambda\right)\right)}$.
It follows that $\ns{x_{j}}$ is not $\approx_{\bm{X}}$-near to any
$x'\in\ns{\pi_{j}}\left(\ns{X}\right)$.\qedhere
\end{description}
\end{proof}
Observe that the modified lemmas and their proofs are completely similar
to the original ones. The remaining part of \cite[Chapter 10, 11, 13]{Kor12}
can be transferred as well. Thus we obtain the proof of \prettyref{thm:continuity-of-mu-homology}.

\section{\label{sec:Rel-between-uniform-Vietoris-and-mu-homology}Relationship
between the Vietoris-type and microsimplicial homology theories}

In this section, we show that the microsimplicial homology theories
are isomorphic to the Vietoris-type homology theories under certain
compactness conditions.

\subsection{The compact case}

Garavaglia \cite{Gar78} showed that McCord homology is isomorphic
to Vietoris homology for all standard compact spaces. Using this result,
we can prove the following.
\begin{prop}
\label{prop:The-compact-case}Let $G$ be a standard abelian group.
Let $\left(X,A\right)$ be a standard compact uniform pair. Then,
$H_{\bullet}^{\mu}\left(X,A;\ns{G}\right)\cong\check{H}_{\bullet}^{u}\left(X,A;\ns{G}\right)$.
\end{prop}

\begin{proof}
We know that $H_{\bullet}^{\mu}\left(X,A;\ns{G}\right)=H_{\bullet}^{M}\left(UX,A;\ns{G}\right)$.
By \cite[Theorem 9]{Kor10}, $H_{\bullet}^{M}\left(UX,A;\ns{G}\right)\cong\check{H}_{\bullet}\left(UX,A;\ns{G}\right)$.
By Lebesgue's number lemma, $\check{H}_{\bullet}\left(UX,A;\ns{G}\right)\cong\check{H}_{\bullet}^{u}\left(X,A;\ns{G}\right)$.
Combining these isomorphisms gives $H_{\bullet}^{\mu}\left(X,A;\ns{G}\right)\cong\check{H}_{\bullet}^{u}\left(X,A;\ns{G}\right)$.
\end{proof}
\begin{cor}
Let $G$ be a standard abelian group. Let $\left(X,A\right)$ be a
standard compact completely regular pair. Then, $H_{\bullet}^{K}\left(X,A;\ns{G}\right)\cong\check{H}_{\bullet}^{n}\left(X,A;\ns{G}\right)$.
\end{cor}

This proof depends on the fact that uniform Vietoris homology is isomorphic
to Vietoris homology for all compact uniform spaces. In the non-compact
case, uniform Vietoris homology may not be isomorphic to Vietoris
homology. For instance, consider the quotient space $\mathbb{Q}/\mathbb{Z}$.
Since $\mathbb{Q}/\mathbb{Z}$ has disjoint open covers as fine as
one likes, the $1$-st Vietoris homology of $\mathbb{Q}/\mathbb{Z}$
vanishes. On the other hand, $\mathbb{Q}/\mathbb{Z}$ and $\mathbb{R}/\mathbb{Z}$
have the same uniform Vietoris homology. In particular, the $1$-st
uniform Vietoris homology of $\mathbb{Q}/\mathbb{Z}$ is isomorphic
to $G$, and hence does not vanish (except for the trivial case $G=0$).
We note that $\mathbb{Q}/\mathbb{Z}$ is a dense subspace of $\mathbb{R}/\mathbb{Z}$.

\subsection{Nonstandard continuity and homotopy}
\begin{defn}
Let $X$ and $Y$ be standard uniform spaces. A map $f\colon\ns{X}\to\ns{Y}$
is said to be \emph{S-continuous} at $x\in\ns{X}$ if $f\left(\mu_{X}^{u}\left(x\right)\right)\subseteq\mu_{Y}^{u}\left(f\left(x\right)\right)$,
or equivalently, if $x\approx_{X}y$ implies $f\left(x\right)\approx_{Y}f\left(y\right)$
for all $y\in\ns{X}$.
\end{defn}

It is clear that the collection of standard uniform pairs and internal
S-continuous maps forms a category $\Unif_{2,S}$. The category $\Unif_{2}$
can be regarded as a subcategory of $\Unif_{2,S}$ under the identification
of standard uniformly continuous maps $f\colon\left(X,A\right)\to\left(Y,B\right)$
with their nonstandard extensions $\ns{f}\colon\ns{\left(X,A\right)}\to\ns{\left(Y,B\right)}$.
Consider the singular homotopy equivalence within $\Unif_{2,S}$ defined
as follows.
\begin{defn}
Let $\left(X,A\right)$ and $\left(Y,B\right)$ be standard uniform
pairs. We say that two internal S-continuous maps $f,g\colon\ns{\left(X,A\right)}\to\ns{\left(Y,B\right)}$
are \emph{S-homotopic} if there exists an internal S-continuous homotopy
$h\colon\ns{\left(X,A\right)}\times\ns{\left[0,1\right]}\to\ns{\left(Y,B\right)}$
between $f$ and $g$.
\end{defn}

The S-continuous maps are precisely the maps that send microsimplices
to microsimplices. Hence every internal S-continuous map $f\colon\ns{\left(X,A\right)}\to\ns{\left(Y,B\right)}$
functorially induces homomorphisms $C_{\bullet}^{\mu}\left(f;G\right)\colon C_{\bullet}^{\mu}\left(X,A;G\right)\to C_{\bullet}^{\mu}\left(Y,B;G\right)$
and hence $H_{\bullet}^{\mu}\left(f;G\right)\colon H_{\bullet}^{\mu}\left(X,A;G\right)\to H_{\bullet}^{\mu}\left(Y,B;G\right)$.
In this setting, $\mu$-homology satisfies the S-homotopy axiom.
\begin{thm}[{\cite[Theorem 11]{Ima16}}]
\label{thm:S-homotopy-axiom-of-mu-hom}Let $G$ be an internal abelian
group. Let $\left(X,A\right)$ and $\left(Y,B\right)$ be standard
uniform pairs. If two internal S-continuous maps $f,g\colon\ns{\left(X,A\right)}\to\ns{\left(Y,B\right)}$
are S-homotopic, then $C_{\bullet}^{\mu}\left(f;G\right)$ and $C_{\bullet}^{\mu}\left(g;G\right)$
are chain-homotopic. Hence $H_{\bullet}^{\mu}\left(f;G\right)=H_{\bullet}^{\mu}\left(g;G\right)$.
\end{thm}

Next, we consider the relationship between uniform Vietoris homology
and S-homotopy equivalence.
\begin{defn}
Let $X$ and $Y$ be uniform spaces. Let $V$ be an entourage of $Y$.
We say that a map $f\colon X\to Y$ is\emph{ $V$\textendash continuous}
at $x\in X$ if there is an entourage $U$ of $X$ such that $f\left(U\left[x\right]\right)\subseteq V\left[f\left(x\right)\right]$.
We say that $f$ is \emph{uniformly $V$\textendash continuous} if
there is an entourage $U$ of $X$ such that $f\left(U\left[x\right]\right)\subseteq V\left[f\left(x\right)\right]$
for all $x\in X$.
\end{defn}

\begin{lem}
\label{lem:preshadow-lemma}Let $\left(X,A\right)$ and $\left(Y,B\right)$
be standard precompact uniform pairs. Let $f\colon\ns{\left(X,A\right)}\to\ns{\left(Y,B\right)}$
be an internal S-continuous map. For each entourage $V$ of $Y$,
there exists a (standard) uniformly $V$\textendash continuous map
$f^{V}\colon\left(X,A\right)\to\left(Y,B\right)$ (called a $V$\textendash preshadow
of $f$) such that $\ns{f^{V}}\left(x\right)\mathrel{\ns{V}}f\left(x\right)$
holds for all $x\in\ns{X}$.
\end{lem}

\begin{proof}
Let $V\in\mathcal{E}_{Y}$. Fix a symmetric $W\in\mathcal{E}_{Y}$
such that $W^{5}\subseteq V$, where $W^{n}$ refers to the $n$-fold
composition $W\circ W\circ\cdots\circ W$. By the nonstandard characterisation
of precompactness \cite[Theorem 8.4.34]{SL76}, for each $x\in X$,
we can choose a $y\in Y$ such that $y\mathrel{\ns{W}}f\left(x\right)$.
Similarly, for each $a\in A$, we can find a $b\in B$ such that $b\mathrel{\ns{W}}f\left(a\right)$.
Thus, we can define a (standard) map $f^{V}\colon\left(X,A\right)\to\left(Y,B\right)$
that satisfies $f^{V}\left(x\right)\mathrel{\ns{W}}f\left(x\right)$
on $X$.

According to the equivalent condition for S-continuity \cite[Theorem 8.4.23]{SL76},
there exists an entourage $U$ of $X$ such that $f\left(\ns{U}\left[x\right]\right)\subseteq\ns{W}\left[f\left(x\right)\right]$
for all $x\in X$. Let $x\in X$ and $y\in U\left[x\right]$. Then,
since $f^{V}\left(x\right)\mathrel{\ns{W}}f\left(x\right)\mathrel{\ns{W}}f\left(y\right)\mathrel{\ns{W}}f^{V}\left(y\right)$,
it follows that $f^{V}\left(x\right)\mathrel{\ns{V}}f^{V}\left(y\right)$.
By transfer, we have $f^{V}\left(x\right)\mathrel{V}f^{V}\left(y\right)$.
Hence $f^{V}\left(U\left[x\right]\right)\subseteq V\left[f^{V}\left(x\right)\right]$
for all $x\in X$. Therefore, $f^{V}$ is uniformly $V$\textendash continuous.

Let $x\in\ns{X}$. By \cite[Theorem 8.4.34]{SL76}, there exists a
$\xi\in X$ such that $\xi\mathrel{\ns{U}}x$. From the previous paragraph,
we have that $\ns{f^{V}}\left(x\right)\mathrel{\ns{W^{3}}}f^{V}\left(\xi\right)$
by transfer. Since $f^{V}\left(\xi\right)\mathrel{\ns{W}}f\left(\xi\right)\mathrel{\ns{W}}f\left(x\right)$,
we conclude that $\ns{f^{V}}\left(x\right)\mathrel{\ns{V}}f\left(x\right)$.
\end{proof}
\begin{rem}
This lemma can be easily refined as follows: let $\left(X,A\right)$
and $\left(Y,B\right)$ be standard uniform pairs. Let $f\colon\ns{\left(X,A\right)}\to\ns{\left(Y,B\right)}$
be an internal map such that $f\left(\ns{X}\right)\subseteq\mathrm{pns}\left(Y\right)$
and $f\left(\ns{A}\right)\subseteq\mathrm{pns}\left(B\right)$, where
$\mathrm{pns}\left(-\right)$ denotes the set of all prenearstandard
points. If $f$ is S-continuous on $X$, then for each entourage $V$
of $Y$, there exists a (non-uniformly) $V$\textendash continuous
map $f^{V}\colon\left(X,A\right)\to\left(Y,B\right)$ such that $f^{V}\left(x\right)\mathrel{\ns{V}}f\left(x\right)$
holds for all $x\in X$. Moreover, if $f$ is S-continuous also on
$\ns{X}\setminus X$, $f^{V}$ can be uniformly $V$\textendash continuous.
Furthermore, if $X$ is precompact, we can ensure that $\ns{f^{V}}\left(x\right)\mathrel{\ns{V}}f\left(x\right)$
holds also for all $x\in\ns{X}\setminus X$.
\end{rem}

\begin{thm}
Let $G$ be a standard abelian group. Let $\left(X,A\right)$ and
$\left(Y,B\right)$ be standard precompact uniform pairs. Every internal
S-continuous map $f\colon\ns{\left(X,A\right)}\to\ns{\left(Y,B\right)}$
functorially induces a homomorphism $\check{H}_{\bullet}^{u}\left(f;G\right)\colon\check{H}_{\bullet}^{u}\left(X,A;G\right)\to\check{H}_{\bullet}^{u}\left(Y,B;G\right)$.
\end{thm}

\begin{proof}
For each entourage $V$, fix a symmetric entourage $\sqrt{V}$ with
$\sqrt{V}^{2}\subseteq V$. By \prettyref{lem:preshadow-lemma}, for
each entourage $V$ of $Y$, there exists a $\sqrt{V}$\textendash preshadow
$f^{\sqrt{V}}\colon\left(X,A\right)\to\left(Y,B\right)$. For a sufficiently
small entourage $U$ of $X$, $f^{\sqrt{V}}$ is a simplicial map
from $\left(X_{U},A_{U}\right)$ to $\left(Y_{V},B_{V}\right)$. It
induces a homomorphism $f_{V}:=H_{\bullet}\left(f^{\sqrt{V}};G\right)\circ p_{U}\colon\check{H}_{\bullet}^{u}\left(X,A;G\right)\to H_{\bullet}\left(Y_{V},B_{V};G\right)$.

On the other hand, by \cite[Theorem 8.4.23]{SL76}, there exists a
symmetric entourage $U$ of $X$ such that $f\left(\ns{U}\left[x\right]\right)\subseteq\ns{\sqrt{V}}\left[f\left(x\right)\right]$
for all $x\in\prescript{\ast}{}{X}$. For simplicity, we abbreviate
$f^{\sqrt{V}}=\ns{f^{\sqrt{V}}}$. Taking $U$ to be sufficiently
small, we may assume that $f^{\sqrt{V}}\left(\ns{U}\left[x\right]\right)\subseteq\ns{\sqrt{V}}\left[f^{\sqrt{V}}\left(x\right)\right]$
holds for all $x\in\prescript{\ast}{}{X}$. Let us prove that $f$
and $f^{\sqrt{V}}$ are internally contiguous as internal simplicial
maps from $\ns{\left(X_{U},A_{U}\right)}$ to $\ns{\left(Y_{V},B_{V}\right)}$.
Let $\left(a_{0},\ldots,a_{p}\right)$ be an internal $p$\textendash simplex
of $\ns{X_{U}}$ (resp. $\ns{A_{U}}$). There exists an $x\in\ns{X}$
such that $a_{k}\mathrel{\ns{U}}x$ for all $0\leq k\leq p$. Since
$f\left(a_{k}\right)\mathrel{\ns{\sqrt{V}}}f\left(x\right)\mathrel{\ns{\sqrt{V}}}f^{\sqrt{V}}\left(x\right)$,
we have $f\left(a_{k}\right)\mathrel{\ns{V}}f^{\sqrt{V}}\left(x\right)$.
Moreover $f^{\sqrt{V}}\left(a_{k}\right)\mathrel{\ns{V}}f^{\sqrt{V}}\left(x\right)$
holds. Hence $\left(f\left(a_{0}\right),\ldots,f\left(a_{p}\right),f^{\sqrt{V}}\left(a_{0}\right),\ldots,f^{\sqrt{V}}\left(a_{p}\right)\right)$
is an internal $\left(2p+1\right)$\textendash simplex of $\ns{Y_{V}}$
(resp. $\ns{B_{V}}$).

Now, we will prove that the following diagram commutes for all $V\supseteq W$:
\[
\xymatrix{ & \check{H}_{\bullet}^{u}\left(X,A;G\right)\ar[dl]_{f_{V}}\ar[dr]^{f_{W}}\\
H_{\bullet}\left(Y_{V},B_{V};G\right)\ar[rr]^{p_{WV}} &  & H_{\bullet}\left(Y_{W},B_{W};G\right)
}
\]
As we proved above, $f$ and $f^{\sqrt{V}}$ are internally contiguous,
and so are $f$ and $f^{\sqrt{W}}$. It follows that $\ns{\left(H_{\bullet}\left(f^{\sqrt{W}};G\right)\right)}=\left(\ns{H_{\bullet}}\right)\left(f;\ns{G}\right)=\ns{\left(p_{WV}\circ H_{\bullet}\left(f^{\sqrt{V}};G\right)\right)}$.
By transfer, we have that $H_{\bullet}\left(f^{\sqrt{W}};G\right)=p_{WV}\circ H_{\bullet}\left(f^{\sqrt{V}};G\right)$,
and hence $f_{W}=p_{WV}\circ f_{V}$. By the universal property of
$\check{H}_{\bullet}^{u}\left(Y,B;G\right)$, we obtain a homomorphism
$\check{H}_{\bullet}^{u}\left(f;G\right)\colon\check{H}_{\bullet}^{u}\left(X,A;G\right)\to\check{H}_{\bullet}^{u}\left(Y,B;G\right)$.
We can also show that $\check{H}_{\bullet}^{u}\left(f;G\right)$ is
independent of the choice of preshadows.

Suppose that $f\colon\ns{\left(X,A\right)}\to\ns{\left(Y,B\right)}$
and $g\colon\ns{\left(Y,B\right)}\to\ns{\left(Z,C\right)}$ are internal
S-continuous maps, where $\left(X,A\right)$, $\left(Y,B\right)$
and $\left(Z,C\right)$ are standard precompact uniform pairs. Let
$V$ be an entourage of $Z$. Let $g^{\sqrt{V}}$ be a $\sqrt{V}$\textendash preshadow
of $g$. There is an entourage $U$ of $Y$ such that $g^{\sqrt{V}}\left(U\left[x\right]\right)\subseteq\sqrt{V}\left[g^{\sqrt{V}}\left(x\right)\right]$.
Let $f^{\sqrt{U}}$ be a $\sqrt{U}$-preshadow of $f$. Then, $g^{\sqrt{V}}\circ f^{\sqrt{U}}$
is a $\sqrt{V}$\textendash preshadow of $g\circ f$. There is an
entourage $W$ of $X$ such that $f^{\sqrt{U}}\left(W\left[x\right]\right)\subseteq\sqrt{U}\left[f^{\sqrt{U}}\left(x\right)\right]$
and $g^{\sqrt{V}}\circ f^{\sqrt{U}}\left(W\left[x\right]\right)\subseteq\sqrt{V}\left[g^{\sqrt{V}}\circ f^{\sqrt{U}}\left(x\right)\right]$
for all $x\in X$. The following diagram is commutative:
\[
\xymatrix{\check{H}_{\bullet}^{u}\left(X,A;G\right)\ar[d]\ar[r]^{\check{H}_{\bullet}^{u}\left(f;G\right)}\ar@(u,u)[rr]^{\check{H}_{\bullet}^{u}\left(g;G\right)\circ\check{H}_{\bullet}^{u}\left(f;G\right)} & \check{H}_{\bullet}^{u}\left(Y,B;G\right)\ar[d]\ar[r]^{\check{H}_{\bullet}^{u}\left(g;G\right)} & \check{H}_{\bullet}^{u}\left(Z,C;G\right)\ar[d]\\
H_{\bullet}\left(X_{W},A_{W};G\right)\ar[r]^{H_{\bullet}\left(f^{\sqrt{U}};G\right)}\ar@(d,d)[rr]_{H_{\bullet}\left(g^{\sqrt{V}}\circ f^{\sqrt{U}};G\right)} & H_{\bullet}\left(Y_{U},B_{U};G\right)\ar[r]^{H_{\bullet}\left(g^{\sqrt{V}};G\right)} & H_{\bullet}\left(Z_{V},C_{V};G\right)
}
\]
We have that $\check{H}_{\bullet}^{u}\left(g\circ f;G\right)=\check{H}_{\bullet}^{u}\left(g;G\right)\circ\check{H}_{\bullet}^{u}\left(f;G\right)$.
\end{proof}
\begin{thm}
\label{thm:S-homotopy-axiom-of-uV-hom}Let $G$ be a standard abelian
group. Let $\left(X,A\right)$ and $\left(Y,B\right)$ be standard
precompact uniform pairs. If two internal S-continuous maps $f,g\colon\ns{\left(X,A\right)}\to\ns{\left(Y,B\right)}$
are S-homotopic, then $\check{H}_{\bullet}^{u}\left(f;G\right)=\check{H}_{\bullet}^{u}\left(g;G\right)$.
\end{thm}

\begin{proof}
Let $h\colon\ns{\left(X,A\right)}\times\ns{\left[0,1\right]}\to\ns{\left(Y,B\right)}$
be an internal S-homotopy between $f$ and $g$. Fix an infinite $N\in\ns{\mathbb{N}}$.
Define $h_{i}:=h\left(\cdot,i/N\right)$ for $i=0,1,\ldots,N$. Let
$V$ be an entourage of $Y$. Let $\sqrt{V}$ be a symmetric entourage
of $Y$ with $\sqrt{V}^{2}\subseteq V$. By \cite[Theorem 8.4.23]{SL76},
there exists a symmetric entourage $U$ of $X$ such that $h_{i}\left(\ns{U}\left[x\right]\right)\subseteq\ns{\sqrt{V}}\left[h_{i}\left(x\right)\right]$
for all $x\in\ns{X}$ and $0\leq i\leq N$.

All $h_{i}$s are internal simplicial maps from $\ns{\left(X_{U},A_{U}\right)}$
to $\ns{\left(Y_{V},B_{V}\right)}$. Let us prove that $h_{i}$ and
$h_{i+1}$ are internally contiguous for all $0\leq i<N$. Let $\left(a_{0},\ldots,a_{p}\right)$
be a $p$\textendash simplex of $\ns{X_{U}}$ (resp. $\ns{A_{U}}$).
There exists an $x\in\ns{X}$ with $\left(a_{k},x\right)\in\ns{U}$
for all $0\leq k\leq n$. Since $h_{i}\left(a_{k}\right)\mathrel{\ns{\sqrt{V}}}h_{i}\left(x\right)\mathrel{\ns{\sqrt{V}}}h_{i+1}\left(x\right)$,
we have $h_{i}\left(a_{k}\right)\mathrel{\ns{V}}h_{i+1}\left(x\right)$.
Moreover $h_{i+1}\left(a_{k}\right)\mathrel{\ns{V}}h_{i+1}\left(x\right)$
holds. Hence $\left(h_{i}\left(a_{0}\right),\ldots,h_{i}\left(a_{p}\right),h_{i+1}\left(a_{0}\right),\ldots,h_{i+1}\left(a_{p}\right)\right)$
is a $\left(2p+1\right)$\textendash simplex of $\ns{Y_{V}}$ (resp.
$\ns{B_{V}}$). It follows that $\left(\ns{H_{\bullet}}\right)\left(f;\ns{G}\right)=\left(\ns{H_{\bullet}}\right)\left(g;\ns{G}\right)$
and $\ns{f_{V}}=\ns{g_{V}}$. By transfer, we have $f_{V}=g_{V}$
and therefore $\check{H}_{\bullet}^{u}\left(f;G\right)=\check{H}_{\bullet}^{u}\left(g;G\right)$.
\end{proof}
\begin{thm}[{\cite[Theorem 12]{Ima16}}]
Let $\left(X,A\right)$ be a standard uniform pair and let $\left(Y,B\right)$
be a subpair of $\left(X,A\right)$. Suppose that $Y$ and $B$ are
dense in $X$ and $A$, respectively. Then there exists an internal
S-deformation retraction $r\colon\ns{\left(X,A\right)}\to\ns{\left(Y,B\right)}$.
\end{thm}

Combining with the S-homotopy axiom (\prettyref{thm:S-homotopy-axiom-of-mu-hom}
and \prettyref{thm:S-homotopy-axiom-of-uV-hom}), we obtain the following
two results.
\begin{cor}
\label{cor:dense-subset-has-the-same-mu-hom-of-superset}Let $G$
be an internal abelian group. Let $\left(X,A\right)$ be a standard
uniform pair and let $\left(Y,B\right)$ be a subpair of $\left(X,A\right)$.
Suppose that $Y$ and $B$ are dense in $X$ and $A$, respectively.
Then the inclusion map $i\colon\left(Y,B\right)\hookrightarrow\left(X,A\right)$
induces an isomorphism $H_{\bullet}^{\mu}\left(i;G\right)\colon H_{\bullet}^{\mu}\left(Y,B;G\right)\cong H_{\bullet}^{\mu}\left(X,A;G\right)$.
\end{cor}

\begin{cor}
\label{cor:dense-subset-has-the-same-uVhom-of-superset}Let $G$ be
an abelian group. Let $\left(X,A\right)$ be a precompact uniform
pair and let $\left(Y,B\right)$ be a subpair of $\left(X,A\right)$.
Suppose that $Y$ and $B$ are dense in $X$ and $A$, respectively.
Then the inclusion map $i\colon\left(Y,B\right)\hookrightarrow\left(X,A\right)$
induces an isomorphism $\check{H}_{\bullet}^{u}\left(i;G\right)\colon\check{H}_{\bullet}^{u}\left(Y,B;G\right)\cong\check{H}_{\bullet}^{u}\left(X,A;G\right)$.
\end{cor}

Notice that the latter generalises the fact that $\mathbb{Q}/\mathbb{Z}$
and $\mathbb{R}/\mathbb{Z}$ have the same uniform Vietoris homology.

\subsection{The precompact case}

Based on these preparations, we now prove the main results in this
section.
\begin{thm}
Let $G$ be a standard abelian group. Let $\left(X,A\right)$ be a
standard precompact uniform pair. Then, $H_{\bullet}^{\mu}\left(X,A;\ns{G}\right)\cong\check{H}_{\bullet}^{u}\left(X,A;\ns{G}\right)$.
\end{thm}

\begin{proof}
Let $\bar{X}$ be a (standard) uniform completion of $X$. Let $\bar{A}$
be the closure of $A$ in $\bar{X}$. Since $X$ is precompact, $\bar{X}$
is compact, and so is $\bar{A}$. By \prettyref{cor:dense-subset-has-the-same-mu-hom-of-superset},
$H_{\bullet}^{\mu}\left(X,A;\ns{G}\right)\cong H_{\bullet}^{\mu}\left(\bar{X},\bar{A};\ns{G}\right)$.
By \prettyref{prop:The-compact-case}, $H_{\bullet}^{\mu}\left(\bar{X},\bar{A};\ns{G}\right)\cong\check{H}_{\bullet}^{u}\left(\bar{X},\bar{A};\ns{G}\right)$.
Finally, by \prettyref{cor:dense-subset-has-the-same-uVhom-of-superset},
$\check{H}_{\bullet}^{u}\left(\bar{X},\bar{A};\ns{G}\right)\cong\check{H}_{\bullet}^{u}\left(X,A;\ns{G}\right)$.
The proof is completed.
\end{proof}
\begin{cor}
Let $G$ be a standard abelian group. Let $\left(X,A\right)$ be a
standard pseudocompact completely regular pair such that $A$ is normally
embedded in $X$. Then, $H_{\bullet}^{K}\left(X,A;\ns{G}\right)\cong\check{H}_{\bullet}^{n}\left(X,A;\ns{G}\right)$.
\end{cor}

\begin{proof}
By \prettyref{prop:K=00003DMuF}, we have that $H^{K}\left(X,A;\ns{G}\right)=H_{\bullet}^{\mu}\left(FX,A;\ns{G}\right)$.
Since $X$ is pseudocompact, $FX$ is precompact. Hence $H_{\bullet}^{\mu}\left(FX,A;\ns{G}\right)\cong\check{H}_{\bullet}^{u}\left(FX,A;\ns{G}\right)$.
By \cite[Theorem 20]{Isb64}, we have that $\check{H}_{\bullet}^{u}\left(FX,A;\ns{G}\right)\cong\check{H}_{\bullet}^{n}\left(X,A;\ns{G}\right)$.
\end{proof}

\subsection{Counterexample in the general case}

The above-mentioned isomorphisms do not hold in general. The cause
is that the elementary embedding $\mathbb{U}\hookrightarrow\ns{\mathbb{U}}$
does not preserve infinitary operations such as infinite direct sums.
\begin{prop}
\label{prop:non-equivalence-theorem}There exist a standard uniform
space $X$ and a standard abelian group $G$ such that $H_{\bullet}^{\mu}\left(X;\ns{G}\right)\not\cong\check{H}_{\bullet}^{u}\left(X;\ns{G}\right)$.
\end{prop}

\begin{proof}
Let $X$ be the discrete uniform space $\mathbb{N}$. Let $G$ be
any (standard) nontrivial finite abelian group. Then, by transfer,
$\ns{G}=G$. It is easy to see that $\check{H}_{0}^{u}\left(X;G\right)\cong G^{\oplus\mathbb{N}}$.
The cardinality of $\check{H}_{0}^{u}\left(X;G\right)$ is $\aleph_{0}$.
On the other hand, we have $H_{0}^{\mu}\left(X;G\right)\cong\ns{\left(G^{\oplus\mathbb{N}}\right)}$.
By weak saturation, each element of $G^{\mathbb{N}}$ can be extended
to an element of $\ns{\left(G^{\oplus\mathbb{N}}\right)}$. Since
the cardinality of $G^{\mathbb{N}}$ is $2^{\aleph_{0}}$, the cardinality
of $H_{0}^{\mu}\left(X;G\right)$ must be at least $2^{\aleph_{0}}$.
Hence $\check{H}_{0}^{u}\left(X;G\right)$ and $H_{0}^{\mu}\left(X;G\right)$
cannot be isomorphic.
\end{proof}
\begin{cor}
There exist a standard completely regular space $X$ and a standard
abelian group $G$ such that $H_{\bullet}^{K}\left(X;\ns{G}\right)\not\cong\check{H}_{\bullet}^{n}\left(X;\ns{G}\right)$.
\end{cor}

\begin{proof}
Let $X$ and $G$ be the same as in \prettyref{prop:non-equivalence-theorem}.
Since $X$ is fine, we have that $H_{\bullet}^{K}\left(UX;G\right)=H_{\bullet}^{\mu}\left(X;G\right)\not\cong\check{H}_{\bullet}^{u}\left(X;G\right)=\check{H}_{\bullet}^{n}\left(UX;G\right)$.
\end{proof}

\section{\label{sec:Homology-of-nonstandard-subsets}Homology of nonstandard
subsets}

Let $X$ be a standard uniform space. We call a (possibly external)
subset $A$ of $\ns{X}$ a \emph{uniform $\mathcal{E}$-set}, and
denote the whole space $X$ by $\mathrm{Amb}\left(A\right)$. The
notion of uniform $\mathcal{E}$-set is the uniform analogue of $\mathcal{E}$-set
(see Wattenberg \cite{Wat78}). We say that a map $f\colon A\to B$
between uniform $\mathcal{E}$-sets is an \emph{S-map} if $f$ has
an internal extension $F\colon\ns{\mathrm{Amb}\left(A\right)}\to\ns{\mathrm{Amb}\left(B\right)}$
that is S-continuous on $A$. The collection of uniform $\mathcal{E}$-sets
and S-maps forms a category $\uE_{S}$. We denote by $\uE_{2,S}$
the category of pairs of uniform $\mathcal{E}$-sets with S-maps.

$\mu$-homology can be extended to $\uE_{S}$ (and to $\uE_{2,S}$)
as follows. First, define the \emph{microchain complex} of $A\in\uE_{S}$
by
\[
C_{p}^{\mu}\left(A;G\right):=\Set{\sum_{i}g_{i}\sigma_{i}\in C_{p}^{\mu}\left(\mathrm{Amb}\left(A\right);G\right)|\sigma_{i}\subseteq A\text{ for all }i},
\]
where $C_{p}^{\mu}\left(\mathrm{Amb}\left(A\right)\right)$ in the
right hand side is defined as in \prettyref{sec:Preliminaries}. Given
an S-map $f\colon A\to B$, choose an internal extension $F\colon\ns{\mathrm{Amb}\left(A\right)}\to\ns{\mathrm{Amb}\left(B\right)}$
that is S-continuous on $A$, and define a homomorphism $C_{\bullet}^{\mu}\left(f;G\right)\colon C_{\bullet}^{\mu}\left(A;G\right)\to C_{\bullet}^{\mu}\left(B;G\right)$
by letting
\[
C_{p}^{\mu}\left(f;G\right)\left(a_{0},\ldots,a_{p}\right):=\left(\ns{F}\left(a_{0}\right),\ldots,\ns{F}\left(a_{p}\right)\right).
\]
It is independent of the choice of $F$. Obviously $C_{\bullet}^{\mu}$
is a functor on $\uE_{S}$. Finally, let $H_{\bullet}^{\mu}\left(\cdot;G\right):=H_{\bullet}C_{\bullet}^{\mu}\left(\cdot;G\right)$.

We can show (in the same way as in \cite{Ima16}) that the extended
$\mu$-homology satisfies the homotopy, exactness, excision, dimension
and finite additivity axioms. The excision axiom is formulated as
follows.
\begin{defn}
Let $X$ be a uniform $\mathcal{E}$-set. Let $A$ and $B$ be subsets
of $X$. We say that $A$ is \emph{strongly contained in $B$} (and
we write $A\Subset B$) if $\mu_{\mathrm{Amb}\left(X\right)}^{u}\left(A\right)\cap X\subseteq B$.
\end{defn}

\begin{prop}
Let $X$ be a uniform $\mathcal{E}$-set. Let $A,B$ be subsets of
$X$ such that either of them is internal. If $X\setminus A\Subset B$
(or $X\setminus B\Subset A$), then the inclusion map $i\colon\left(A,A\cap B\right)\hookrightarrow\left(X,B\right)$
induces an isomorphism $H_{\bullet}^{\mu}\left(i;G\right)\colon H_{\bullet}^{\mu}\left(A,A\cap B;G\right)\cong H_{\bullet}^{\mu}\left(X,B;G\right)$.
\end{prop}

\begin{proof}
Similarly to \prettyref{prop:Excision-of-mu-hom}, we can prove that
each microsimplex of $X$ is contained in either $A$ or $B$. Given
$u:=\sum_{i}g_{i}\sigma_{i}\in C_{p}^{\mu}\left(X;G\right)$, if $A$
is internal, then $u$ can be decomposed into $\sum_{\sigma_{i}\subseteq A}g_{i}\sigma_{i}\in C_{p}^{\mu}\left(A;G\right)$
and $\sum_{\sigma_{i}\nsubseteq A}g_{i}\sigma_{i}\in C_{p}^{\mu}\left(B;G\right)$.
If $B$ is internal, then $u$ can be decomposed into $\sum_{\sigma_{i}\subseteq B}g_{i}\sigma_{i}\in C_{p}^{\mu}\left(B;G\right)$
and $\sum_{\sigma_{i}\nsubseteq B}g_{i}\sigma_{i}\in C_{p}^{\mu}\left(A;G\right)$.
\end{proof}
\begin{rem}
The internality condition of the excision axiom cannot be omitted.
Otherwise, the excision axiom causes a contradiction as follows. Let
$G$ be any standard nontrivial finite abelian group. (Note that $G=\ns{G}$
by transfer.) By applying the Mayer-Vietoris theorem to the triad
$\left(\ns{\mathbb{R}};\mu_{\mathbb{R}}\left(0\right),\ns{\mathbb{R}}\setminus\mu_{\mathbb{R}}\left(0\right)\right)$,
we obtain the following exact sequence:
\[
\xymatrix{0\ar[r] & H_{0}^{\mu}\left(\mu_{\mathbb{R}}\left(0\right);G\right)\oplus H_{0}^{\mu}\left(\ns{\mathbb{R}}\setminus\mu_{\mathbb{R}}\left(0\right);G\right)\ar[r] & H_{0}^{\mu}\left(\ns{\mathbb{R}};G\right)\ar[r] & 0}
\]
It is not difficult to see that $H_{0}^{\mu}\left(\mu_{\mathbb{R}}\left(0\right);G\right)\cong G$,
$H_{0}^{\mu}\left(\ns{\mathbb{R}}\setminus\mu_{\mathbb{R}}\left(0\right);G\right)\cong G\oplus G$
and $H_{0}^{\mu}\left(\ns{\mathbb{R}};G\right)\cong G$. However,
by exactness $G\oplus G\oplus G\cong H_{0}^{\mu}\left(\mu_{\mathbb{R}}\left(0\right);G\right)\oplus H_{0}^{\mu}\left(\ns{\mathbb{R}}\setminus\mu_{\mathbb{R}}\left(0\right);G\right)\cong H_{0}^{\mu}\left(\ns{\mathbb{R}};G\right)\cong G$,
which makes a contradiction.
\end{rem}

\section{\label{sec:Use-of-the-saturation-principle}Use of the saturation
principle}

The saturation principle is not necessary to construct the microsimplicial
homology theories. The satisfaction of the Eilenberg-Steenrod axioms
can be proved without the \emph{full} saturation principle (only using
the \emph{weak} saturation principle, i.e. the enlargement property
of the nonstandard universe). The equivalences among the microsimplicial
homology theories can also be proved without saturation. On the other
hand, the equivalences between the Vietoris-type and microsimplicial
homology theories depend on saturation (see \cite[Theorem 4]{Gar78}).
The proofs of the continuities of Korppi and $\mu$-homology also
depend on saturation.

\bibliographystyle{amsalpha}
\bibliography{\string"Relationship among various Vietoris-type and microsimplicial homology theories\string"}

\end{document}